\documentclass[12pt]{article}
\usepackage{amsmath}
\usepackage{amsfonts}
\usepackage{amssymb}
\usepackage{latexsym}
\usepackage{graphicx}
\usepackage{subfig}
\usepackage{enumerate}
\usepackage[shortlabels]{enumitem}
\usepackage{mathtools}

\usepackage{amsmath, amssymb,latexsym}
\usepackage{color}
\usepackage{xcolor}
\usepackage{fullpage}
\usepackage{hyperref}

\usepackage[T1]{fontenc}
\usepackage[utf8]{inputenc}
\usepackage{lmodern}

\def\N{\mathbb{N}}

\newcommand{\R}{\mathbb R}
\newcommand{\C}{\mathbb C}
\newcommand{\F}{\mathbb F}
\newcommand{\E}{\mathcal E}

\newcommand{\PP}{\mathcal P}
\newcommand{\dR}{d_{\R}}
\newcommand{\dC}{d_{\C}}

\DeclareMathOperator{\spann}{span}
\newcommand{\ip}[2]{\langle #1,#2\rangle}

\definecolor{brilliantrose}{rgb}{1.0, 0.33, 0.64}
\definecolor{myviolet}{rgb}{0.21, 0.0, 0.85}
\definecolor{amethyst}{rgb}{0.6, 0.4, 0.8}
\definecolor{carrotorange}{rgb}{0.93, 0.57, 0.13}

\DeclareMathOperator{\rank}{rank}

\DeclareMathOperator{\tr}{trace}

\newtheorem{theorem}{Theorem}[section]
\newtheorem{proposition}[theorem]{Proposition}

\newtheorem{lemma}[theorem]{Lemma}

\newtheorem{corollary}[theorem]{Corollary}
\newtheorem{remark}[theorem]{{\sc Remark}}

\title{Matrix nearness problems:\\ Do real inputs admit real solutions?}
\author{Vanni Noferini\thanks{Corresponding author. Supported by a Research Council of Finland grant (decision number
370932). Address: Department of Mathematics and Systems Analysis, Aalto University, P.O. Box 11100, FI-00076, Aalto, Finland. Email: vanni.noferini@aalto.fi} \ and \ Matvei Zhukov\thanks{Supported by a Research Council of Finland grant (decision number
370932). Address: Department of Mathematics and Systems Analysis, Aalto University, P.O. Box 11100, FI-00076, Aalto, Finland. Email: matvei.zhukov@aalto.fi}}
\date{\today}

\begin{document}
\maketitle

\begin{abstract}
Given a real square matrix $A$ and a nonempty closed target set of complex matrices $\E$, invariant by complex
conjugation and containing real matrices, does the subset of $\E$ consisting of the matrices nearest to $A$ in the Frobenius distance always contain a real matrix?
We give negative answers when $\E$ is either the set of normal matrices (solving a question posed by N. Higham) or the set of matrices whose eigenvalues
lie in the closed left half-plane (solving a question posed by the first author and F. Poloni). 
For the latter problem, we also argue that the ratio between the real and complex distances is
unbounded in every dimension $n\geq 3$, and this holds for every distance induced by a unitarily invariant norm. For both problems and $n \geq 3$, we show that a real input $A$ uniformly drawn from the unit sphere $\|A\|_F=1$ has no real minimizer over $\E$ with probability strictly between $0$ and $1$. The paper is complemented by
some further results that are valid for more general target sets $\E$.
\end{abstract}

\textbf{Keywords:} Matrix nearness problem, real distance, normal matrix, Hurwitz stable matrix,  unitarily invariant norm
\vskip10pt
\noindent \textbf{MSC Classification}: 15A60, 15A18, 41A50

\section{Introduction}\label{sec:intro}

\subsection{Motivation}

In the beautiful expository paper \cite{Waterhouse}, W.~C. Waterhouse demolished
the common (but wrong) intuition that, if a multivariate real optimization
problem enjoys a particular symmetry in its variables, then a global minimizer
can be always chosen to have the same symmetry. Waterhouse called this intuition the
Purkiss Principle, and demonstrated that it is generally false. In this short
paper, we similarly tackle a matrix version of the Purkiss
Principle in the context of matrix nearness problems \cite{GLbook,Higham,NP2}. In this setting, the symmetry that concerns us is complex conjugation: If the input
data are real and the target set is invariant by conjugation and contains real matrices, must there be a real solution?

More formally, let us consider a real matrix $A\in\R^{n\times n}$ and a closed target
set $\E\subseteq\C^{n\times n}$ such that
\begin{equation}\label{eq:Eprop}
    X\in\E\ \Longrightarrow\ \overline X\in\E,
 \qquad \E\cap\R^{n\times n}\ne\varnothing. 
\end{equation}
For most (but not all) results in this paper, distances are measured in the Frobenius
norm, denoted by $\| \cdot \|_F$. We write
\begin{equation}\label{eq:distances}
 \dC(A)=\min_{X\in\E}\|A-X\|_F,
 \qquad
 \dR(A)=\min_{X\in\E\cap\R^{n\times n}}\|A-X\|_F.
\end{equation}
Both minima exist, because both $\mathcal{E}$ and its intersection with $\R^{n \times n}$ are by assumption nonempty closed subsets of a finite dimensional metric space.
It is also clear by their definition that $\dC(A)\le\dR(A)$. We ask for which target sets $\mathcal{E}$ equality holds for
\emph{every} real input $A$. For some of our results, we will generalize the Frobenius distance by that induced by an arbitrary unitarily invariant norm.

Intuitively, one might expect a positive answer to our question. If $B \in \E$ is nearest to
a real $A$, then so is $\overline B \in \E$, using  \eqref{eq:Eprop}. In particular, whenever the matrix in $\E$ nearest to $A$ is unique, then it must be real. This happens, for example, when $\E$ is convex. Even for a nonconvex target $\E$, as long as the property $\Re\E\subseteq\E$ holds then $B \in \E$ implies $\Re B\in\E$, and therefore
\begin{equation}\label{eq:pythagoras}
 \|A-B\|_F^2=\|A-\Re B\|_F^2+\|\Im B\|_F^2,
\end{equation}
forcing every nearest matrix to be real. One example of a nonconvex set satisfying $\Re \E \subseteq \E$ is the set of $n \times n$ matrices with at most $m$ nonzero elements. Indeed, taking the real part cannot create further nonzeros, and on the other hand, for example when $m=1,n=2$, we have
\[  \begin{bmatrix}
    1&0\\
    0&0
\end{bmatrix} \in \E, \qquad \begin{bmatrix}
    0&0\\
    0&1
\end{bmatrix} \in \E, \qquad \begin{bmatrix}
    \frac12 & 0\\
    0 & \frac12
\end{bmatrix}  \not\in \E, \]
showing that $\E$ is nonconvex. (More generally, $\E$ is not convex if $0<m<n^2$.) A much more classical example of a
nonconvex target set for which the answer is positive comes from the Eckart-Young-Mirsky-Schmidt Theorem \cite{EckartYoung,Mirsky, Schmidt}: There is always a real best approximation of prescribed maximum rank to a real input  even when the optimization problem is posed over complex matrices. The result can be further generalized. Indeed, it was proved in \cite{NP2} that every real input $A$ admits a real closest matrix whenever the target set $\E$ is unitarily invariant, that is, if $U,V$ are unitary matrices then $X \in \E \Rightarrow U^* X V \in \E$. Besides low rank approximations, this family of target sets includes for example $\E=U(n)$(nearest unitary matrix); more examples are given in \cite{NP2}.

On the other hand, it is not hard to see that the properties \eqref{eq:Eprop} alone are insufficient. Even for
the scalar case $n=1$, one can for instance take $\E=\{2,i,-i\}$ and $A=0$, and observe that $\dC(A)=1<2=\dR(A)$, or in other words the minimizer breaks the problem's symmetry. A question of particular interest is whether the same phenomenon occurs for sets $\mathcal{E}$ that arise in classical matrix nearness problems. Matrix nearness problems have a long history in numerical
linear algebra \cite{Higham} and they still are an extremely active area \cite{GLbook,NP2}. In the classical survey \cite{Higham}, N. Higham asked
whether a real matrix always has a real nearest normal matrix when complex
competitors are allowed; more recently, some numerical evidence possibly suggesting a negative answer was given in \cite[Section~4]{NZ}.
For the analogous question in the context of the nearest stable matrix problem, a positive answer was explicitly
conjectured by V. Noferini and F. Poloni \cite[Conjecture 1]{NP}.

In this paper, we show that both those two concrete instances of the question have negative answers, providing counterexamples already for matrices of dimension $n=3$. For normal matrices, we first compute the real distance by an optimization over the unit sphere in $\R^3$. This then allows us to construct a counterexample. For stable matrices, we argue that the failure is even more pronounced in the sense that allowing complex entries can improve the distance by an arbitrarily large relative factor, and we exhibit a family of examples to demonstrate this behavior. For both problems, these observations remain true in every dimension $n\geq3$, and for the case of stable matrices it also remains true for every distance induced by a unitarily invariant norm. On the contrary, we show that for the Frobenius distance this phenomenon does not occur when $n\leq2$. Moreover, we prove that there exists an open neighbourhood of inputs in which there is a strict gap between $\dR$ and $\dC$, thus showing that the counterexamples are not exceptional inputs at which an otherwise valid principle fails, and that indeed a similar behavior can be expected with positive probability. On the other hand, for both problems we also exhibit nonempty open sets of inputs that provably admit a real nearest matrix. Hence, for an input sampled uniformly from the unit sphere and for $n\geq3$, the probability of having a real minimizer is strictly between $0$ and $1$. For stable matrices, this remains true even if we restrict ourselves to unstable inputs. Finally, we prove a general result for the Frobenius distance and an arbitrary nonempty closed target set: For almost every real input, either there is a unique nearest matrix and it is real, or all nearest matrices are nonreal.

The structure of the paper is the following. In Section~\ref{sec:general} we establish general facts valid for an arbitrary nonempty closed target set $\mathcal{E} \subseteq \C^{n \times n}$, as well as some further properties valid for target sets that additionally satisfy \eqref{eq:Eprop}. We then discuss more closely the cases, especially relevant for applications \cite{GLbook,Higham,NP,NP2,NZ,Ruhe}, of normal and Hurwitz stable matrices in Sections~\ref{sec:normal}
and~\ref{sec:stable}, respectively. Section \ref{sec:conc} draws some final conclusions.

\subsection{Notation}

Given a matrix $X \in \C^{n \times n}$, $\overline{X}$ denotes the elementwise conjugate of $X$, $X^*=(\overline{X})^T$ denotes the conjugate transpose of $X$,  whereas $\Re X$ and
$\Im X$ are (resp.) the elementwise real and imaginary parts of $X$. The Frobenius inner product
on $n \times n$ complex matrices (seen as a real Hilbert space of dimension $2n^2$) is
$\ip{X}{Y}=\Re\tr(Y^*X)$; this inner product induces the Frobenius norm, defined as $\| X \|_F=\sqrt{\ip{X}{X}}$, and the Frobenius distance between two matrices $X$ and $Y$, defined as $\|X-Y\|_F$. Throughout the paper, ``almost all'' and ``almost everywhere'' statements about real inputs are relative
to the Lebesgue measure on $\R^{n\times n}$.

\section{General target sets}\label{sec:general}

In this section, $\E$ denotes a nonempty closed subset of $\C^{n \times n}$.
Having fixed one such target set $\E$, we define a subset $\PP(A) \subseteq \E$ (depending also on an input $A$) and a function $f$ as follows:
\begin{equation}\label{eq:Pandf}
  \PP(A):=\{B\in\E:\|A-B\|_F=\dC(A)\},
 \quad f:\R^{n\times n} \rightarrow [0,+\infty[, \ A \mapsto f(A) = \dC(A)^2.   
\end{equation}
 In \eqref{eq:Pandf}, the notation $\dC(A)$ refers to \eqref{eq:distances}. We also remark that $f$ in \eqref{eq:Pandf} is a real function of a real matrix input. Below, we will discuss its derivatives, and these should be interpreted coherently with this setup. The other quantity defined in \eqref{eq:Pandf} is the set $\PP(A)$ of the minimizers in $\E$ of the distance from $A$. It is clear that $\PP(A)$ is nonempty, because $\mathcal{E}$ is closed and nonempty, and we show in Lemma \ref{lem:compact} that it is compact.

 \begin{lemma}\label{lem:compact}
     For every nonempty closed set $\E \subseteq \C^{n \times n}$ and every $A \in \R^{n \times n}$, the set $\PP(A)$ defined in \eqref{eq:Pandf} is compact.
 \end{lemma}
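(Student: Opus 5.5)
The plan is to use the Heine--Borel theorem in the finite-dimensional space $\C^{n\times n}$, identified with $\R^{2n^2}$ via the Frobenius inner product. There, a set is compact exactly when it is closed and bounded. So it suffices to check these two properties for $\PP(A)$, for fixed $A\in\R^{n\times n}$.

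For boundedness, every $B\in\PP(A)$ satisfies $\|A-B\|_F=\dC(A)$. The triangle inequality then gives
\[
\|B\|_F\le \|A\|_F+\dC(A).
\]
Hence $\PP(A)$ lies in the closed Frobenius ball of radius $\|A\|_F+\dC(A)$ centred at $0$. This radius is finite because the minimum defining $\dC(A)$ exists, as noted after \eqref{eq:distances}.

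For closedness, write
\[
\PP(A)=\E\cap g^{-1}(\{\dC(A)\}), \qquad g(X):=\|A-X\|_F.
\]
The map $g\colon\C^{n\times n}\to\R$ is continuous; indeed it is $1$-Lipschitz by the reverse triangle inequality. The preimage of the closed singleton $\{\dC(A)\}$ is therefore closed. Since $\E$ is closed by hypothesis, $\PP(A)$ is the intersection of two closed sets and so is closed.

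Combining boundedness and closedness, Heine--Borel gives that $\PP(A)$ is compact. Nonemptiness was already observed before the statement and is not needed for compactness. No step is a real obstacle; the only points needing care are that $\dC(A)$ is finite, which holds because the minimum is attained, and that closedness uses the hypothesis that $\E$ is closed.
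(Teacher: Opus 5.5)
Your proof is correct and matches the paper's argument. You show $\PP(A)$ is closed as the intersection of the closed set $\E$ with the preimage of $\{\dC(A)\}$ under the continuous map $X\mapsto\|A-X\|_F$, and bounded via $\|B\|_F\le\|A\|_F+\dC(A)$; the only difference is that you invoke Heine--Borel explicitly, which the paper leaves implicit.
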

 \begin{proof}
     The set $\PP(A)$ is the intersection of $\E$, which is closed by assumption, and the preimage of the closed set $\{\dC(A)\}$ under the continuous function $X \mapsto \|A-X\|_F$. Thus, $\PP(A)$ is closed. Moreover, $\PP(A)$ is bounded because, for every $B \in \PP(A)$, the triangle inequality yields $\|B\|_F \leq \|A\|_F + \dC(A)$.
 \end{proof}

 Proposition \ref{prop:differentiability} below describes some further useful properties of the elements of $\PP(A)$.

\begin{proposition}\label{prop:differentiability}
Fix a nonempty closed set $\E \subseteq \C^{n \times n}$, and let $\PP(A)$ and $f$ be defined as in \eqref{eq:Pandf}. Then, the function $f$ is Fr\'{e}chet-differentiable at $A\in\R^{n\times n}$ if and only if all
matrices in $\PP(A)$ have the same real part. At every point of differentiability of $f$, it holds
\begin{equation}\label{eq:gradient}
 \nabla f(A)=2(A-\Re B)\qquad \forall \ B\in\PP(A),
\end{equation}
and the imaginary parts of all the matrices in $\PP(A)$ have the same Frobenius norm.
\end{proposition}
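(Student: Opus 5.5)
The plan is to rewrite $f$ as a smooth function minus a convex function and then use the standard characterization of differentiability of a convex function through its subdifferential (Danskin's theorem).

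Since $A$ is real, for every $B\in\C^{n\times n}$ we have $\ip{A}{B}=\Re\tr(B^*A)=\ip{A}{\Re B}$. Hence
\[
 \|A-B\|_F^2=\|A\|_F^2-2\ip{A}{\Re B}+\|B\|_F^2 .
\]
It follows that
\[
 f(A)=\|A\|_F^2-g(A),\qquad g(A):=\sup_{B\in\E}\bigl(2\ip{A}{\Re B}-\|B\|_F^2\bigr).
\]
The function $g$ is finite and convex on $\R^{n\times n}$, because it is a pointwise supremum of affine functions of $A$. Since $\|A\|_F^2$ is smooth, $f$ is Fr\'echet differentiable at $A$ if and only if $g$ is. Moreover, the maximizers defining $g(A)$ are exactly the elements of $\PP(A)$.

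Next I would compute $\partial g(A)$ via Danskin's theorem and show that
\[
 \partial g(A)=\operatorname{conv}\{2\Re B: B\in\PP(A)\}.
\]
The one technical point, which I expect to be the main obstacle, is that $\E$ is not compact, so Danskin's theorem cannot be applied directly. To fix this, I would localize. For $A'$ in a bounded neighbourhood $N$ of $A$, every near-maximizer $B$ satisfies $\|B\|_F\le \|A'\|_F+\dC(A')+1$, and this is uniformly bounded on $N$ because $\dC$ is $1$-Lipschitz. So on $N$ we can replace $\E$ by the compact set $\E\cap\{\|B\|_F\le R\}$ for a suitable $R$ without changing $g$. Lemma~\ref{lem:compact} gives that $\PP(A)$ is compact, and upper semicontinuity of the argmax on this compact set then justifies the Danskin formula.

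A finite convex function is differentiable at $A$ if and only if its subdifferential there is a singleton. By the formula above, this happens if and only if $\Re B$ is the same for all $B\in\PP(A)$, which proves the equivalence. In that case $\nabla g(A)=2\Re B$, and therefore $\nabla f(A)=2A-2\Re B$, which is \eqref{eq:gradient}.

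Finally, take any $B\in\PP(A)$. By the orthogonal splitting \eqref{eq:pythagoras},
\[
 f(A)=\|A-\Re B\|_F^2+\|\Im B\|_F^2 .
\]
At a point of differentiability the common real part makes the first term independent of $B$. Hence $\|\Im B\|_F^2=f(A)-\|A-\Re B\|_F^2$ is the same for all $B\in\PP(A)$.
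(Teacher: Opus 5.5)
Your proposal is correct, but it follows a different route from the paper.

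\textbf{Your route.} You use the Asplund-type splitting $f(A)=\|A\|_F^2-g(A)$ with $g$ convex. You localize to the compact piece $\E\cap\{\|B\|_F\le R\}$. Then you invoke Danskin's theorem together with the fact that a finite convex function on $\R^{n\times n}$ is Fr\'echet differentiable exactly when its subdifferential is a singleton. Because $\operatorname{conv}\{2\Re B: B\in\PP(A)\}$ is a singleton only if $\{\Re B: B\in\PP(A)\}$ is, both directions and the gradient formula come out at once.

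\textbf{The paper's route.} The paper avoids convex analysis.
\begin{itemize}
\item For the forward direction, it notes that $X\mapsto\|X-B\|_F^2-f(X)$ is nonnegative and vanishes at $A$, so its gradient vanishes there. In your language, this is the inclusion $2\Re B\in\partial g(A)$.
\item For the converse, it proves directly, by a subsequence/compactness argument, that $\Re B_k\to R$ for any $B_k\in\PP(A+H_k)$. It then squeezes $f(A+H_k)-f(A)-2\ip{A-R}{H_k}$ between two $o(\|H_k\|_F)$ bounds.
\end{itemize}
That convergence claim is exactly the upper semicontinuity of the argmax map that drives Danskin's theorem, so the two proofs rest on the same mechanism.

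\textbf{What each buys.} The paper's proof is self-contained. Yours relies on cited convex-analysis results but gives more: the full subdifferential $\partial g(A)=\operatorname{conv}\{2\Re B:B\in\PP(A)\}$, hence semiconcavity of $f$ with an explicit superdifferential. As a side benefit, almost-everywhere differentiability of $f$, used in Theorem~\ref{thm:dichotomy}, would follow from convexity of $g$ alone, without Rademacher's theorem.

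\textbf{Localization.} This step is sound. The supremum defining $g(A')$ is attained on $\PP(A')$, whose elements have norm at most $\|A'\|_F+\dC(A')$, and $\dC$ is $1$-Lipschitz. So truncating $\E$ leaves $g$ unchanged on a neighbourhood of $A$. Two convex functions that agree near $A$ have the same subdifferential and the same differentiability status there.
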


\begin{proof}
Suppose first that $f$ is Fr\'{e}chet-differentiable at $A$, and fix $B\in\PP(A)$. The
function
\[
 g(X)=\|X-B\|_F^2-f(X),\qquad X\in\R^{n\times n},
\]
is everywhere nonnegative and vanishes at $A$. Thus its gradient at $A$ (which exists because $g$ is the sum of two functions both differentiable at $A$) is zero, yielding \eqref{eq:gradient}. The final part of the statement about imaginary parts follows from
\eqref{eq:pythagoras}.

We next prove the converse implication of the first part of the
statement. Suppose that there exists a real matrix
$R\in\R^{n\times n}$ such that $\Re B=R$ for every $B\in\PP(A)$.
Let $(H_k)_k\subset\R^{n\times n}$ be an arbitrary sequence with
$H_k\ne0$ and $H_k\to0$, and construct a second sequence $(B_k)_k$ by choosing $B_k\in\PP(A+H_k)$ for every $k$.
We claim that $\Re B_k\to R$, independently of our choice of the
two sequences.

Assuming the claim, let us fix $B\in\PP(A)$, and observe that (using $\Re B=R$)
\[ f(A+H_k) \leq \|A+H_k-B \|_F^2 = f(A) + 2\ip{A-R}{H_k} +\|H_k\|_F^2.  \]
On the other hand, since $f(A+H_k) = \|A+H_k-B_k \|_F^2$ and expanding the right hand side, 
\[ f(A+H_k) = \|A-B_k \|_F^2 + 2 \ip{A-\Re B_k}{H_k} + \|H_k\|_F^2 \geq f(A) + 2 \ip{A-\Re B_k}{H_k} + \|H_k\|_F^2.  \]
Thus,
\[ \|H_k\|_F^2 + 2\ip{R-\Re B_k}{H_k}  \leq f(A+H_k)-f(A)-2\ip{A-R}{H_k} \leq \|H_k\|_F^2  \]
which implies, using the Cauchy-Schwarz inequality,
\[
 \left|f(A+H_k)-f(A)-2\ip{A-R}{H_k}\right|
 \le \|H_k\|_F^2
      +2\|\Re B_k-R\|_F\,\|H_k\|_F
 =o(\|H_k\|_F).
\]
Since $(H_k)_k$ was an arbitrary sequence, this last inequality proves that $f$ is Fr\'{e}chet-differentiable at $A$
and $\nabla f  =2(A-R)$.

It only remains to prove the claim. To this goal, suppose for a contradiction that $\Re B_k$ does not converge to $R$. Then, there exists
$\varepsilon>0$ and a subsequence $(B_{k_j})_j$ such that, for all $j$,
$\|\Re B_{k_j}-R\|_F\ge\varepsilon$. On the other hand, $(B_k)_k$ is bounded because, by the triangle inequality,
\[
 \|B_k\|_F
 \le \|A+H_k\|_F+\dC(A+H_k)
 \le \|A\|_F+\dC(A)+2\|H_k\|_F.
\]
A fortiori, the subsequence $B_{k_j}$ is also bounded.
Therefore, it admits a convergent sub-subsequence $(B_{k_{j_i}})_i$; call $B_\infty$ the limit of such sub-subsequence. Since $\E$ is closed and $(B_k)_k \subset \E$, we have that
$B_\infty \in\E$. Moreover, the continuity of the Frobenius distance yields
\[
 \|A-B_\infty\|_F
 =\lim_{i\to\infty}\|A+H_{k_{j_i}}-B_{k_{j_i}}\|_F
 =\lim_{i\to\infty}\dC(A+H_{k_{j_i}})
 =\dC(A).
\]
We conclude that $B_\infty\in\PP(A)$, and hence $\Re B_\infty=R$. This is a contradiction and it proves the claim that $\Re B_k \to R$.
\end{proof}

As a next step, in Corollary \ref{cor:equivalence} we recast our question about whether every real input has a real minimizer in terms of having or not a unique minimizer for \emph{almost} every real input. To achieve this goal, we first show in Theorem \ref{thm:dichotomy} that generically there are only two possible situations for a real input $A$, namely, either there is a unique nearest matrix in $\E$ and it is real, or there is no real nearest matrix at all in $\E$.

\begin{theorem}\label{thm:dichotomy}
Let $\E \subseteq \C^{n \times n}$ be a nonempty closed  set. For almost every $A\in\R^{n\times n}$, precisely one of the following
statements holds:
\begin{enumerate}
 \item There is a unique matrix in $\E$ nearest to $A$, and it is real.
 \item All the matrices in $\E$ nearest to $A$ are nonreal.
\end{enumerate}
If, in addition, $\E$ satisfies \eqref{eq:Eprop}, then in the second case there are in $\E$ at least two distinct matrices nearest to $A$.
\end{theorem}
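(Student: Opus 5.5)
The plan is to combine Proposition \ref{prop:differentiability} with a differentiability-almost-everywhere result for $f$. First we show that $f(A)=\dC(A)^2$ is differentiable at almost every $A\in\R^{n\times n}$. Write
\[
f(A)=\min_{X\in\E}\|A-X\|_F^2=\|A\|_F^2-\max_{X\in\E}\bigl(2\ip{A}{X}-\|X\|_F^2\bigr),
\]
where $\ip{A}{X}=\ip{A}{\Re X}$ because $A$ is real. The map
\[
A\mapsto \max_{X\in\E}\bigl(2\ip{A}{\Re X}-\|X\|_F^2\bigr)
\]
is a supremum of affine functions of $A$. It is finite because the maximum is attained, so it is a finite convex function on $\R^{n\times n}$. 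Hence $f$ is a smooth function minus a convex function. By Alexandrov's theorem, or simply Rademacher's theorem (convex functions are locally Lipschitz), $f$ is Fréchet differentiable almost everywhere. We fix such an $A$ for the rest of the argument.

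At this $A$, Proposition \ref{prop:differentiability} tells us that all $B\in\PP(A)$ share a common real part $R$ and a common value of $\|\Im B\|_F$. Suppose some $B\in\PP(A)$ is real. Then $\|\Im B\|_F=0$, so every element of $\PP(A)$ has zero imaginary part and real part $R$. Therefore $\PP(A)=\{R\}$, which is statement 1. Otherwise every minimizer is nonreal, which is statement 2. The two statements are clearly mutually exclusive, since statement 1 provides a real minimizer. Note that no conjugation invariance has been used up to this point.

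For the additional claim, assume \eqref{eq:Eprop} and that we are in case 2. Pick $B\in\PP(A)$. Since $A$ is real, $\|A-\overline B\|_F=\|\overline{A-B}\|_F=\|A-B\|_F$, and $\overline B\in\E$, so $\overline B\in\PP(A)$. Because $B$ is nonreal, $\overline B\neq B$, which gives two distinct minimizers.

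The only step requiring care is the almost-everywhere differentiability. Everything hinges on recognizing $\|A\|_F^2-f$ as a convex function: it is a pointwise supremum of affine functions, and it is finite since $\E$ is nonempty and closed. Once that is established, the rest follows directly from Proposition \ref{prop:differentiability}.
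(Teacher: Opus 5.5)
Your proof is correct and follows the paper's argument. You establish almost-everywhere differentiability of $f$, use Proposition~\ref{prop:differentiability} (common real part and common $\|\Im B\|_F$) to force a real minimizer to be the unique one, and use conjugation invariance to get $\overline{B}\neq B$ in case~2. The only difference is how you obtain almost-everywhere differentiability: the paper notes that $\dC$ is Lipschitz, so $f$ is locally Lipschitz and Rademacher's theorem applies, whereas you show that $\|A\|_F^2-f(A)$ is a finite supremum of affine functions and hence convex, which is equally valid.
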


\begin{proof}
The distance function $\dC$ is Lipschitz, and thus $f$ in \eqref{eq:Pandf} is locally Lipschitz.
Therefore, by Rademacher's theorem \cite[Section~3.1]{EvansGariepy}, $f$ is 
almost everywhere differentiable. Let $A$ be a point at which $f$ is differentiable. If there is a real matrix
$C\in\PP(A)$, then by Proposition~\ref{prop:differentiability} $\Re B=C$ for
all $B\in\PP(A)$. Hence,
\[
 \|A-C\|_F^2=\|A-B\|_F^2
 =\|A-C\|_F^2+\|\Im B\|_F^2,
\]
and therefore $B=C$. Otherwise, all nearest matrices in $\PP(A)$ are nonreal by assumption. In this latter scenario, suppose further that $\E$ satisfies \eqref{eq:Eprop}; then, $B \in \PP(A) \Leftrightarrow \overline{B} \in \PP(A)$ and $B \neq \overline{B}$, showing that $\PP(A)$ contains at least two elements.
\end{proof}

\begin{corollary}\label{cor:equivalence}
Let $\E \subseteq \C^{n \times n}$ be a closed set satisfying \eqref{eq:Eprop}. Then, every real input admits a real minimizer of its distance to $\E$ if and only
if almost every real input has a unique minimizer over $\E$.
\end{corollary}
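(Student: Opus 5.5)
We prove the two implications separately; Theorem~\ref{thm:dichotomy} and the conjugation symmetry \eqref{eq:Eprop} do essentially all of the work.

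\emph{Forward direction.} Suppose every real input admits a real minimizer. By Theorem~\ref{thm:dichotomy}, there is a full-measure set of real inputs $A$ for which exactly one of two alternatives holds. The second alternative says that $\PP(A)$ contains only nonreal matrices. This is excluded by hypothesis, since $\PP(A)$ contains a real matrix. Hence the first alternative holds for almost every $A$: $\PP(A)$ is a singleton, and its element is real. In particular, almost every real input has a unique minimizer.

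\emph{Converse direction.} Suppose almost every real input has a unique minimizer.

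First consider such an $A$ with $\PP(A)=\{B\}$. By \eqref{eq:Eprop}, $\overline B\in\E$. Since $A$ is real, $\|A-\overline B\|_F=\|\overline{A-B}\|_F=\|A-B\|_F$, so $\overline B\in\PP(A)$. Uniqueness then forces $B=\overline B$, so $B$ is real. Therefore $\dR(A)=\dC(A)$ on a full-measure, hence dense, subset of $\R^{n\times n}$.

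Now take an arbitrary real $A$ and extend the equality by a limiting argument. Choose real $A_k\to A$ from this dense set, and let $B_k\in\E\cap\R^{n\times n}$ be real minimizers for $A_k$. We have $\|B_k\|_F\le \|A_k\|_F+\dC(A_k)$, and $\dC$ is $1$-Lipschitz, so the sequence $(B_k)_k$ is bounded. Extract a convergent subsequence with limit $B_\infty$. Then:
\begin{itemize}
\item $B_\infty$ is real, as a limit of real matrices;
\item $B_\infty\in\E$, because $\E$ is closed;
\item $\|A-B_\infty\|_F=\lim\dC(A_k)=\dC(A)$, by continuity of $\dC$.
\end{itemize}
Thus $B_\infty$ is a real minimizer for $A$.

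This closure step mirrors the compactness argument in the proof of Proposition~\ref{prop:differentiability}, and it is the only place needing slight care. The subtlety is that uniqueness holds only almost everywhere, so the density of the good set and the continuity of $\dC$ are both essential. Equivalently, one can note that $\dR$ is also $1$-Lipschitz (it is a distance to the nonempty closed set $\E\cap\R^{n\times n}$). Then $\dR=\dC$ holds on a dense set, hence everywhere, and the minimum defining $\dR(A)$ is attained by a real matrix.
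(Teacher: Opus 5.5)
Your proposal is correct and follows the paper's proof. The forward direction rules out case~2 of Theorem~\ref{thm:dichotomy}. The converse uses conjugation invariance to get $\dR=\dC$ almost everywhere, then extends this to every input by continuity. The paper invokes the continuity of both $\dR$ and $\dC$ directly, which is exactly your ``equivalently'' remark; your compactness argument is just an explicit unpacking of that step.
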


\begin{proof}
\begin{itemize}
    \item[$\Rightarrow$] If every real input has a real minimizer, then case~\textup{2.} of
Theorem~\ref{thm:dichotomy} never happens, and therefore case~\textup{1.} occurs for almost every $A$.
\item[$\Leftarrow$] If the minimizer is unique, then it is
real by conjugation invariance. Hence,
$\dC=\dR$ almost everywhere. But both distance functions are continuous, and therefore equality holds
everywhere.
\end{itemize}

\end{proof}

We next prove Theorem \ref{thm:robustness}, which shows that, if the set of real inputs not admitting a real nearest matrix is nonempty, then it is open (and thus it has in particular positive Lebesgue measure in $\R^{n \times n}$).

\begin{theorem}\label{thm:robustness}
Let $\E \subseteq \C^{n \times n}$ be a closed set satisfying $\E \cap \R^{n \times n} \neq \varnothing$, and define $d_\C$ and $d_\R$ as in \eqref{eq:distances}. Then, the set
\[
 \mathcal G=\{A\in\R^{n\times n}:\dC(A)<\dR(A)\}
\]
is open. In particular, if $d_\C(A) < d_\R(A)$, then $A+E \in \mathcal{G}$ for every $E \in \R^{n \times n}$ such that
\[ \|E\|_F< \frac{ d_\R(A)-d_\C(A)}{2}.\]
\end{theorem}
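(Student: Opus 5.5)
The plan is to show that both $\dC$ and $\dR$ are $1$-Lipschitz functions on $\R^{n\times n}$ with respect to the Frobenius norm, and then to read off the explicit radius. The second part of the statement implies the first, since it provides an open ball around every point of $\mathcal G$.

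\emph{Step 1: Lipschitz continuity.} Fix real $A$ and $E$. Choose $X\in\E$ attaining $\dC(A)$; such an $X$ exists because $\E$ is closed. Then the triangle inequality gives
\[
\dC(A+E)\le\|A+E-X\|_F\le\dC(A)+\|E\|_F .
\]
Exchanging the roles of $A$ and $A+E$ yields $|\dC(A+E)-\dC(A)|\le\|E\|_F$. The same argument applies verbatim to $\dR$, using the nonempty closed set $\E\cap\R^{n\times n}$ in place of $\E$; this is the only place where the hypothesis $\E\cap\R^{n\times n}\neq\varnothing$ is used. Note that no conjugation invariance is needed.

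\emph{Step 2: the explicit radius.} Put $\delta=\dR(A)-\dC(A)>0$ and take $\|E\|_F<\delta/2$. By Step 1,
\[
\dR(A+E)-\dC(A+E)\;\ge\;\bigl(\dR(A)-\|E\|_F\bigr)-\bigl(\dC(A)+\|E\|_F\bigr)\;=\;\delta-2\|E\|_F\;>\;0,
\]
so $A+E\in\mathcal G$. Hence $\mathcal G$ contains an open ball around each of its points, and is therefore open.

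There is no real obstacle here. The only point needing care is that both minima are attained, which holds because the relevant sets are nonempty and closed; one could equally argue with infima and avoid the issue entirely.
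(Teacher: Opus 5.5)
Your proof is correct and is essentially the paper's argument: the paper uses the same two one-sided triangle-inequality bounds, $\dR(A)\le\dR(A+E)+\|E\|_F$ and $\dC(A+E)\le\dC(A)+\|E\|_F$, and subtracts them to obtain $\dR(A+E)-\dC(A+E)\ge\dR(A)-\dC(A)-2\|E\|_F>0$. Proving full two-sided $1$-Lipschitz continuity is slightly more than needed but harmless.
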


\begin{proof}
By the triangle inequality and for every  $E \in \R^{n \times n}$,
\[ d_\R(A) \leq d_\R(A+E) + \| E \|_F \qquad \text{and} \qquad d_\C(A+E) \leq d_\C(A) + \|E\|_F.\]
Suppose now that $A \in \mathcal{G}$ and $2 \|E\|_F < d_\R(A)-d_\C(A)$. Then, using also the above inequalities,
\[ d_\R(A+E) - d_\C(A+E) \geq d_\R(A)-d_\C(A) - 2 \|E\|_F > 0.    \]
\end{proof}

\begin{remark}
    If, in Theorem \ref{thm:robustness}, the Frobenius distances $d_\R$ and $d_\C$ are replaced by distances induced by an arbitrary matrix norm $\nu$, and the Frobenius norm $\|E\|_F$ is replaced by $\nu(E)$, the statement still holds (with the same proof).
\end{remark}

Several target sets of practical interest in applications are invariant under scaling by a positive real number, i.e., if $X \in \mathcal{E}$ and $t > 0$ then $t X \in \mathcal{E}$. This is true in particular of the two sets that we will more deeply analyze in Section \ref{sec:normal} and Section \ref{sec:stable}, that is, normal matrices and Hurwitz stable matrices, but also of other classical examples in matrix nearness problems (say, matrices having rank at most $r$). Corollary \ref{thm:cone} discusses an additional property that can be established in this case.

\begin{corollary}\label{thm:cone}
Let $\E \subset \C^{n \times n}$ be a closed set satisfying both \eqref{eq:Eprop} and $t\E=\E$ for all $t > 0$. Then, if $\mathcal{G}$ is defined as in Theorem \ref{thm:robustness},
it holds $t\mathcal G=\mathcal G$ for all $t > 0$. Moreover, either $\mathcal G$ is empty or  $\mathcal{G} \cap \{ \|A\|_F=1\}$ has finite positive measure.
\end{corollary}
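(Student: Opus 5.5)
The plan is to handle the scaling invariance first and then the measure statement, using Theorem \ref{thm:robustness} for openness.

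\emph{Scaling.} If $t\E=\E$ for every $t>0$, then $t(\E\cap\R^{n\times n})=\E\cap\R^{n\times n}$, since multiplying by a positive real preserves realness. Fix $A\in\R^{n\times n}$ and $t>0$. The map $X\mapsto tX$ is a bijection of $\E$ onto itself with $\|tA-tX\|_F=t\|A-X\|_F$, so taking minima gives $\dC(tA)=t\,\dC(A)$. The same bijection restricted to $\E\cap\R^{n\times n}$ gives $\dR(tA)=t\,\dR(A)$. Hence $\dC(tA)<\dR(tA)$ if and only if $\dC(A)<\dR(A)$, so $t\mathcal G\subseteq\mathcal G$. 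Applying this with $1/t$ gives the reverse inclusion, and therefore $t\mathcal G=\mathcal G$.

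\emph{Measure on the sphere.} The measure here is the surface (Hausdorff) measure on the unit Frobenius sphere $S=\{\|A\|_F=1\}$ in $\R^{n\times n}$. Finiteness is immediate because $S$ is compact, so $\mathcal G\cap S$ has measure at most that of $S$. Now suppose $\mathcal G\neq\varnothing$.
\begin{itemize}
\item First, $0\notin\mathcal G$. By \eqref{eq:Eprop} we have $\E\cap\R^{n\times n}\neq\varnothing$, and cone invariance gives $tX\in\E$ for some real $X\in\E$ and all $t>0$. Closedness of $\E$ then puts $0$ in $\E$, so $\dR(0)=\dC(0)=0$.
\item Next, pick $A\in\mathcal G$. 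By the first part, $A/\|A\|_F\in\mathcal G\cap S$, so this set is nonempty.
\item By Theorem \ref{thm:robustness}, $\mathcal G$ is open in $\R^{n\times n}$. Therefore $\mathcal G\cap S$ is a nonempty relatively open subset of the smooth $(n^2-1)$-dimensional sphere.
\item A nonempty relatively open subset of the sphere contains a small geodesic cap, which has positive surface measure.
\end{itemize}

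An alternative route to positivity uses polar coordinates. The Lebesgue measure of the cone $\{tA: A\in\mathcal G\cap S,\ 0<t<1\}=\mathcal G\cap\{0<\|A\|_F<1\}$ equals $\sigma(\mathcal G\cap S)/n^2$. This set is open and nonempty, so its Lebesgue measure is positive, and hence $\sigma(\mathcal G\cap S)>0$.

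The main point needing care is that $\mathcal G\cap S$ is nonempty, which is exactly why the scaling invariance is proved first. Once that is in place, the remaining steps are routine.
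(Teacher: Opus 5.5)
Your proof is correct and follows essentially the same route as the paper. Positive homogeneity of $\dR$ and $\dC$ gives $t\mathcal G=\mathcal G$, Theorem~\ref{thm:robustness} gives openness, normalizing any $G\in\mathcal G$ shows that $\mathcal G\cap\{\|A\|_F=1\}$ is nonempty, and a nonempty relatively open subset of the sphere has finite positive measure. Two of your additions go slightly beyond the paper: your check that $0\notin\mathcal G$ (via $0\in\E$ by closedness and cone invariance) justifies the normalization $G/\|G\|_F$, which the paper uses without comment, and your polar-coordinates argument is a valid alternative way to get positivity.
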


\begin{proof}
    Clearly, 
$ d_\R(tA)- d_\C(tA)= t \left( d_\R(A)-d_\C(A) \right)$, proving $\mathcal{G}=t\mathcal{G}$. The intersection of $\mathcal{G}$ with the unit sphere is relatively open, because $\mathcal{G}$ is open in $\R^{n \times n}$ by Theorem \ref{thm:robustness}. Moreover, if $\mathcal{G}$ is not empty, its intersection is not empty because for every $G \in \mathcal{G}$ we have that $G/\|G\|_F$ belongs to the intersection. The statement follows because a nonempty open subset of the sphere has positive and finite measure.
\end{proof}

Corollary \ref{thm:cone} applies in particular to the cases where $\E$ is either the set of normal matrices or the set of Hurwitz stable matrices. Below, we will construct in Theorem \ref{thm:normalexample} (resp. Theorem \ref{thm:stable}), examples of real inputs whose nearest normal (resp. stable) matrices are nonreal. Combining these results with Theorem \ref{thm:robustness} and Corollary \ref{thm:cone} leads us to conclude that open cones of such inputs exist, and that if one uniformly samples the unit sphere then there is a positive probability to encounter one such input.

\section{Nearest normal matrices}\label{sec:normal}

In this section, the target set $\E$ is the set of normal matrices, i.e., $\E=\{N\in\C^{n\times n}:NN^*=N^*N\}$. It is clear that this set is closed,
is invariant by complex conjugation, and contains real matrices. The problem of finding a nearest normal matrix is classical \cite{Higham, Ruhe} and still an active area \cite{NP2,NZ}.

We begin our analysis by deriving a formula for the distance from a real $A$ to real normal matrices valid in dimension
$n=3$. To this goal, we identify the set of real $3 \times 3$ skew-symmetric matrices with $\R^3$, via the bijection
\begin{equation}\label{eq:K}
     k=\begin{bmatrix}
    k_1\\
    k_2\\
    k_3
\end{bmatrix} \in \R^3 \mapsto \mathcal{K}(k) =  \begin{bmatrix}
 0&k_3&-k_2\\-k_3&0&k_1\\k_2&-k_1&0
 \end{bmatrix}. 
\end{equation}
Observe in particular that
$\|\mathcal{K}(k)\|_F^2=2\|k\|_2^2$.

\begin{lemma}\label{lem:commutant}
Let $0\ne K=-K^T\in\R^{3\times3}$, and let $q \in \R^3$ be a unit vector in
$\ker K$. The set of the real symmetric matrices that commute with $K$ is precisely
\[
 \{X \in \R^{3 \times 3} : X=\alpha I_3+\beta qq^T \ \mathrm{for} \ \mathrm{some} \ \alpha,\beta\in\R\}.\]
\end{lemma}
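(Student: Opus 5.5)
Both inclusions can be handled with the spectral structure of $K$. First, the easy inclusion: every matrix $\alpha I_3+\beta qq^T$ is real symmetric. It commutes with $K$ because $I_3$ commutes with everything, and $Kq=0$ gives
\[
K qq^T = (Kq)q^T = 0, \qquad qq^T K = q(K^Tq)^T = -q(Kq)^T = 0 .
\]

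For the reverse inclusion, I would use an explicit orthonormal basis adapted to $K$. Since $K\neq 0$ is real skew-symmetric of size $3$, its rank is exactly $2$. Writing $K=\mathcal{K}(k)$ as in \eqref{eq:K}, one checks that $Kk=0$, so $\ker K=\spann\{k\}$ and $q=\pm k/\|k\|_2$. Complete $q$ to an orthonormal basis $\{u,v,q\}$ of $\R^3$ and set $Q=[u\ v\ q]$. Then $Q^TKQ$ is again skew-symmetric and has third row and column zero. It therefore has the form $\begin{bmatrix} cJ & 0\\ 0 & 0\end{bmatrix}$ with $J=\begin{bmatrix}0&1\\-1&0\end{bmatrix}$ and $c\neq 0$.

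Now let $X$ be real symmetric with $XK=KX$, and write $Y=Q^TXQ=\begin{bmatrix} Y_{11} & y\\ y^T & \gamma\end{bmatrix}$, which is symmetric with $Y_{11}\in\R^{2\times 2}$. Commutation with $Q^TKQ$ splits into three conditions:
\begin{enumerate}
\item $Y_{11}J=JY_{11}$;
\item $y^T J=0$ (from the bottom-left block), hence $y=0$ because $J$ is invertible and $c\neq0$;
\item $Jy=0$, which is consistent with $y=0$.
\end{enumerate}
A symmetric $2\times2$ matrix commuting with $J$ must be a multiple of $I_2$. Indeed, for $Y_{11}=\begin{bmatrix}a&b\\b&d\end{bmatrix}$, a direct computation gives $Y_{11}J-JY_{11}=\begin{bmatrix}-2b & a-d\\ a-d & 2b\end{bmatrix}$, which forces $b=0$ and $a=d$. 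Hence $Y=\mathrm{diag}(\alpha,\alpha,\gamma)=\alpha I_3+(\gamma-\alpha)e_3e_3^T$. Transforming back with $Q$ gives $X=\alpha I_3+\beta qq^T$ with $\beta=\gamma-\alpha$.

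The only step needing care is justifying the block form of $Q^TKQ$, namely that $\ker K$ is exactly one-dimensional and that the $2\times 2$ block is a nonzero multiple of $J$. Both follow because a real skew-symmetric $3\times3$ matrix has rank $0$ or $2$, and $2\times 2$ skew-symmetric matrices are scalar multiples of $J$.

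An alternative route would be to observe that $X$ commuting with $K$ must preserve $\ker K$, so $q$ is an eigenvector of $X$, and then argue on $q^\perp$. The block computation above is more direct, so I would use it.
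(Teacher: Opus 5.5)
Your proof is correct and follows essentially the same route as the paper: an orthogonal change of basis with $Qe_3=q$ brings $K$ to $c(J\oplus 0)$ with $c\neq 0$, and the block commutation conditions then force $Y_{11}=\alpha I_2$ and $y=0$. Your version also spells out two things the paper leaves implicit: it checks the easy inclusion explicitly, and it derives the block form directly instead of citing the real Schur form.
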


\begin{proof}
By computing the real Schur form, there exists an orthogonal matrix $Q$ such that $Qe_3=q$ and $Q^TKQ=\lambda(J\oplus0)$ for some
$\lambda\ne0$ and
having defined $J:=\begin{bmatrix}
    0&1\\
    -1&0
\end{bmatrix}$.
A generic $3 \times 3$ symmetric block matrix
$\begin{bmatrix}
    X & x\\
    x^T & \beta
\end{bmatrix}$, where $X=X^T \in \R^{2 \times 2}$,
commutes with $J\oplus0$ precisely when $JX=XJ$ and $Jx=0$.
These equations in turn imply that $X=\alpha I_2$ (for some $\alpha \in \R$) and $x=0$. It follows that a real symmetric matrix commutes with $K$ if it has the form 
$\alpha I_3+(\beta-\alpha)qq^T$.
\end{proof}

\begin{corollary}\label{cor:commutant}
    Every real nonsymmetric normal  $3\times3$ matrix can be written
as
\begin{equation}\label{eq:normalparam}
 N=\alpha I_3+\beta qq^T+\gamma \mathcal{K}(q),
 \qquad \|q\|_2=1,\quad \alpha,\beta \in \R, \qquad 0\neq\gamma\in\R.
\end{equation}
\end{corollary}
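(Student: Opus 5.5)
The plan is to split the real normal matrix $N$ into its symmetric and skew-symmetric parts and show that normality forces these two parts to commute; Lemma~\ref{lem:commutant} then pins down the symmetric part.

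\textbf{Step 1 (the decomposition).} Write $N=S+K$ with $S=(N+N^T)/2$ symmetric and $K=(N-N^T)/2$ skew-symmetric, both real. Since $N$ is real, $N^*=N^T=S-K$. Expanding the normality condition $NN^T=N^TN$ gives
\[
(S+K)(S-K)=(S-K)(S+K),
\]
that is, $-SK+KS=SK-KS$. This reduces to $SK=KS$, so $S$ commutes with $K$.

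\textbf{Step 2 (the skew part).} Because $N$ is nonsymmetric, $K\neq0$. Under the bijection \eqref{eq:K}, $K=\mathcal{K}(k)$ for some nonzero $k\in\R^3$. Set $q=k/\|k\|_2$ and $\gamma=\|k\|_2\neq0$, so that $K=\gamma\mathcal{K}(q)$ by linearity of $\mathcal{K}$.

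\textbf{Step 3 (the symmetric part).} We need $q$ to be a unit vector in $\ker K$. This holds because $\mathcal{K}(k)k=0$, which one checks directly:
\begin{itemize}
\item first entry: $k_3k_2-k_2k_3=0$;
\item second entry: $-k_3k_1+k_1k_3=0$;
\item third entry: $k_2k_1-k_1k_2=0$.
\end{itemize}
Lemma~\ref{lem:commutant} then applies to the symmetric matrix $S$, which commutes with $K$, and gives $S=\alpha I_3+\beta qq^T$ for some $\alpha,\beta\in\R$. Combining Steps 2 and 3 yields \eqref{eq:normalparam}.

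The only point requiring any care is the kernel verification in Step 3, which is a one-line computation; everything else is immediate.
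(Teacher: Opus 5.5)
Your proof is correct and follows the paper's argument. You decompose $N=S+K$, use that normality forces $SK=KS$, write $K=\gamma\mathcal{K}(q)$ with $q$ a unit vector in $\ker K$, and invoke Lemma~\ref{lem:commutant}. The only difference is that you verify the commutation relation and the kernel fact by direct computation, whereas the paper cites \cite{GJSW} for the former and simply asserts the latter.
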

\begin{proof}
A real matrix $N=S+K$, with $S=S^T$ and $K=-K^T$, is normal if and only if
$SK=KS$ \cite{GJSW}. Observe that, if $K\ne0$ and $\ker K = \spann(q)$, then $K=\gamma \mathcal{K}(q)$ for some
$\gamma\ne0$ and where $\mathcal{K}$ is the map \eqref{eq:K}. The parametrization \eqref{eq:normalparam} then follows  by Lemma \ref{lem:commutant}.
\end{proof}

\begin{proposition}\label{prop:normaldistance}
Let $A=S+\mathcal{K}(k)\in\R^{3\times3}$, where $S=S^T$ and $\mathcal{K}$ is the map \eqref{eq:K}, and define
\[ \mu:=\tr(S)/3, \qquad T:=S-\mu I_3, \qquad \xi =  \max_{\|q\|_2=1}
 \left[\frac32(q^TTq)^2+2(k^Tq)^2\right]. \]
 Then, for the Frobenius distance from $A$ to real normal matrices it holds
\[
\begin{split}
 d_{\R}(A)^2
 =2\|k\|_2^2 + \min\bigg\{0,\;&\|T\|_F^2-\xi\bigg\}.
\end{split}
\]
\end{proposition}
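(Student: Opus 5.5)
The plan is to minimize $\|A-N\|_F^2$ over real normal $N$, splitting into symmetric $N$ and nonsymmetric $N$ as parametrized in Corollary~\ref{cor:commutant}. Because the symmetric and skew-symmetric parts are orthogonal in the Frobenius inner product, we have $\|A-N\|_F^2=\|S-N_s\|_F^2+\|\mathcal K(k)-N_k\|_F^2$, where $N_s$ and $N_k$ denote the symmetric and skew parts of $N$.

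\emph{Symmetric candidates.} Taking $N=S$ gives $\|A-N\|_F^2=2\|k\|_2^2$. No symmetric $N$ can do better, since the skew part of $A$ cannot be approximated. This case accounts for the term $0$ inside the minimum.

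\emph{Nonsymmetric candidates.} Write $N=\alpha I_3+\beta qq^T+\gamma\mathcal K(q)$ with $\|q\|_2=1$ and $\gamma\ne 0$. We allow $\gamma=0$ as well, since the infimum is then still attained either here or by the symmetric case. Fix $q$ and optimize over $\alpha,\beta,\gamma$ separately.

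For the skew part, $\|\mathcal K(k)-\gamma\mathcal K(q)\|_F^2=2\|k-\gamma q\|_2^2$. This is minimized at $\gamma=k^Tq$, with value $2\|k\|_2^2-2(k^Tq)^2$.

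For the symmetric part, we need the least-squares projection of $S=\mu I_3+T$ onto $\spann\{I_3,qq^T\}$. The two spanning matrices are not orthogonal, so we orthogonalize: use $I_3$ and $P:=qq^T-\tfrac13 I_3$. Then $\ip{P}{I_3}=0$ and $\|P\|_F^2=1-\tfrac23+\tfrac13=\tfrac23$. Since $T$ is traceless, the $I_3$-component of $S$ is exactly $\mu I_3$ and leaves no residual. Also $\ip{T}{P}=q^TTq$. The residual is therefore $\|T\|_F^2-(q^TTq)^2/\|P\|_F^2=\|T\|_F^2-\tfrac32(q^TTq)^2$.

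Adding the two contributions, the value at fixed $q$ is
\[2\|k\|_2^2+\|T\|_F^2-\Big[\tfrac32(q^TTq)^2+2(k^Tq)^2\Big].\]
Minimizing over the unit sphere, which is compact so the maximum $\xi$ is attained, gives $2\|k\|_2^2+\|T\|_F^2-\xi$.

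\emph{Combining the cases.} The distance $d_\R(A)^2$ is the minimum of the two cases, namely $2\|k\|_2^2+\min\{0,\|T\|_F^2-\xi\}$. One subtlety needs care. If the optimal $\gamma=k^Tq$ vanishes, the candidate matrix is symmetric rather than of the form \eqref{eq:normalparam}. Its value is still achieved by a real normal matrix, and it is at least the symmetric-case value $2\|k\|_2^2$, so the formula is unaffected. Real normal matrices are exactly the symmetric ones together with those in \eqref{eq:normalparam}, so the two cases cover everything.

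The main obstacle is the symmetric least-squares projection, specifically getting the constant $\tfrac32$ from the nonorthogonality of $I_3$ and $qq^T$. Orthogonalizing with $P$ handles it.
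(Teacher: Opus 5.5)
Your proposal is correct and follows the paper's proof essentially step by step. You use the same split into symmetric matrices and the family \eqref{eq:normalparam} with $\gamma=0$ allowed, the same separate optimization of symmetric and skew-symmetric parts, and the same treatment of the $\gamma=0$ case; the only cosmetic difference is that you obtain $\|T\|_F^2-\tfrac32(q^TTq)^2$ by orthogonalizing $\{I_3,qq^T\}$ with $qq^T-\tfrac13 I_3$, whereas the paper directly minimizes the quadratic $\|T\|_F^2+3a^2+2a\beta+\beta^2-2\beta p$ in $(a,\beta)$.
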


\begin{proof}
Every real $3 \times 3$ normal matrix, including any one closest to $A$, either has three (counted with multiplicity) real eigenvalues, and therefore is symmetric, or has one real eigenvalue and two nonreal eigenvalues, and therefore is not symmetric. Using also Corollary \ref{cor:commutant}, we can write the set of $3 \times 3$ real normal matrices as $\E \cap \R^{3 \times 3} = \E_0 \cup \E_1$, where $\E_0$ is the set of real symmetric matrices and $\E_1$ is the closure of the set of matrices of the form \eqref{eq:normalparam}. Hence,
\begin{equation}\label{eq:notapartition}
     \dR(A)^2 = \min \left\{  \min_{Y \in \E_0} \|A-Y\|_F^2, \min_{X \in \E_1} \|A-X\|_F^2  \right\}.
\end{equation}
Defining $\E_1$ as the closure of the set of matrices described by \eqref{eq:normalparam}, or equivalently possibly allowing $\gamma=0$, is necessary for the existence of a matrix in $\E_1$ nearest to $A$, but it also implies that $\E_0 \cap \E_1 \neq \varnothing$, because when $\gamma=0$ \eqref{eq:normalparam} becomes symmetric. However, this is not problematic, because \eqref{eq:notapartition} remains valid even if $\E_0$ and $\E_1$ are not a partition of real normal matrices.

Computing the distance from $A$ to $\E_0$ is trivial: The real symmetric matrix closest to $A$ is $(A+A^T)/2=S$, and it lies at squared distance $2\|k\|_2^2$ from $A$.
To compute instead the distance from $A$ to $\E_1$, we must optimize over the parameters of
\eqref{eq:normalparam} (possibly allowing $\gamma=0$). To this goal, observe that symmetric and skew-symmetric matrices are orthogonal with respect to the Frobenius inner product.
Therefore, we can optimize separately over the symmetric and skew-symmetric parts of \eqref{eq:normalparam}. Writing $a=\alpha-\mu$ and
$p=q^TTq$, and using $\tr T=0$, $\|qq^T\|_F^2=1$, and
$\ip{T}{qq^T}=p$, we obtain
\[
 \|S-\alpha I_3-\beta qq^T\|_F^2
 =\|T\|_F^2+3a^2+2a\beta+\beta^2-2\beta p.
\]
The latter expression is quadratic in $a$ and $\beta$, and it is thus elementary to minimize it. By performing the corresponding calculations, we see that the minimum is attained at $a=-p/2$ and $\beta=3p/2$, and the minimum value is
\[ \| T \|_F^2 - \frac32 p^2 = \| T \|_F^2 - \frac32 (q^T T q)^2.  \]
Similarly,
\[
 \|\mathcal{K}(k)-\gamma \mathcal{K}(q)\|_F^2
 =2\|k-\gamma q\|_2^2
 =2\|k\|_2^2-2(k^Tq)^2+2(\gamma-k^Tq)^2,
\]
which is minimized when $\gamma=k^Tq$, with value $2 \|k\|_2^2-2(k^T q)^2$. Hence, the squared distance from $A$ to $\E_1$ is $2\|k\|_2^2 + \|T\|_F^2 -\xi$, and this concludes the proof.
\end{proof}

Now that we have a formula for the real distance, we construct in Theorem \ref{thm:normalexample} an example of an input that admits a nonreal normal matrix at a distance closer than $d_\R$.

\begin{theorem}\label{thm:normalexample}
Let
\[
A_0=\begin{bmatrix}0&60&0\\60&0&-40\\0&40&0\end{bmatrix}.
\]
Then,
\[
d_{\C}(A_0)^2\le3160<3200=d_{\R}(A_0)^2.
\]
In particular, no complex normal matrix nearest to $A_0$ is real.
\end{theorem}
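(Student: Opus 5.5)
The plan is to compute $d_\R(A_0)$ exactly from Proposition~\ref{prop:normaldistance}, and to bound $d_\C(A_0)$ from above by exhibiting one explicit nonreal normal matrix. The last sentence of the statement then follows immediately: every real normal matrix lies at squared distance at least $3200$ from $A_0$, which is more than $d_\C(A_0)^2$.

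\emph{Real distance.} Split $A_0=S+\mathcal K(k)$. The symmetric part $S$ has entries $60$ in positions $(1,2)$ and $(2,1)$ and zeros elsewhere. The skew part has $-40$ in position $(2,3)$, so $k=(-40,0,0)^T$ and $2\|k\|_2^2=3200$. Since $\tr S=0$, we have $\mu=0$, $T=S$ and $\|T\|_F^2=7200$. For a unit vector $q$, $q^TTq=120q_1q_2$ and $k^Tq=-40q_1$, so
\[
\xi=\max_{\|q\|_2=1}\Big[21600\,q_1^2q_2^2+3200\,q_1^2\Big].
\]
For fixed $q_1$, the bracket is largest when $q_3=0$. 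Setting $c=q_1^2$, we maximize $21600c(1-c)+3200c$ over $c\in[0,1]$. The maximizer is $c=24800/43200$, and the maximum is $24800^2/86400\approx 7118.5<7200$. Hence $\|T\|_F^2-\xi>0$, the minimum in Proposition~\ref{prop:normaldistance} equals $0$, and $d_\R(A_0)^2=3200$.

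\emph{Complex upper bound.} We use the standard fact that, for any unitary $U$, the matrix $N=U\,\mathrm{diag}(U^*A_0U)\,U^*$ is normal and satisfies
\[
\|A_0-N\|_F^2=\|A_0\|_F^2-\|\mathrm{diag}(U^*A_0U)\|_F^2=10400-\sum_i|u_i^*A_0u_i|^2 .
\]
It therefore suffices to produce a unitary $U$ with genuinely complex columns such that $\sum_i|u_i^*A_0u_i|^2\ge 7240$. For comparison, real orthogonal $U$ give at most $7200$.

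This step is the main obstacle: finding a concrete, checkable $U$. I would first run a numerical local optimization of $\sum_i|u_i^*A_0u_i|^2$ over $U(3)$, starting from the best real configuration and perturbing it with complex phases. Then I would round the resulting unitary to one with simple algebraic entries, for instance built from unit vectors with entries in $\{0,\pm1,\pm i\}$ scaled by rational numbers, or a Cayley transform of a skew-Hermitian matrix with small rational entries. The rounded unitary must still give a diagonal with squared norm at least $7240$, so that $\|A_0-N\|_F^2\le 3160$. Alternatively, one can directly write down a normal matrix $N$ with rational or Gaussian-integer entries. In that case one verifies $NN^*=N^*N$ by hand and computes $\|A_0-N\|_F^2\le3160$ entrywise; this avoids the unitary altogether and gives the cleanest certificate.
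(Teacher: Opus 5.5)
Your computation of $d_\R(A_0)^2=3200$ is correct and matches the paper's. Both arguments reduce, via Proposition~\ref{prop:normaldistance}, to maximizing $21600c(1-c)+3200c$ over $c=q_1^2\in[0,1]$, with maximizer $c=31/54$. This shows that every nonsymmetric real normal matrix lies at squared distance at least $3200+7200-24800^2/86400=88600/27\approx 3281$ from $A_0$. Your reformulation $\|A_0-N\|_F^2=10400-\|\mathrm{diag}(U^*A_0U)\|_F^2$ for $N=U\,\mathrm{diag}(U^*A_0U)\,U^*$ is also correct, and the route is viable in principle.

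\textbf{The gap} is that the inequality $d_\C(A_0)^2\le 3160$, which is the whole content of the theorem, is never proved. You do not exhibit a unitary $U$ with $\sum_i|u_i^*A_0u_i|^2\ge 7240$, nor a normal $N$ at squared distance at most $3160$; you only describe how you would search for one. Nothing in your argument shows that such a $U$ exists, so as written the proposal does not rule out $d_\C(A_0)=d_\R(A_0)$. Moreover, a numerically optimized and then rounded $U$ is only approximately unitary. It gives an exact certificate only if unitarity is enforced exactly (e.g.\ by a Cayley transform with rational entries), and then the bound still has to be verified exactly. The paper closes this step with the explicit matrix
\[
N_0=\begin{bmatrix}10i&60&20i\\60&7i&-9\\-20i&9&-17i\end{bmatrix},\qquad
A_0-N_0=\begin{bmatrix}-10i&0&-20i\\0&-7i&-31\\20i&31&17i\end{bmatrix},
\]
so $\|A_0-N_0\|_F^2=100+400+49+961+400+961+289=3160$. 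Normality is checked by writing $N_0=H+iK$ with Hermitian matrices
\[
H=\begin{bmatrix}0&60&20i\\60&0&0\\-20i&0&0\end{bmatrix},\qquad
K=\begin{bmatrix}10&0&0\\0&7&9i\\0&-9i&-17\end{bmatrix},
\]
and observing that $HK=KH=10H$. Supplying this explicit certificate would complete your proof. Equivalently, in your framework, a unitary matrix diagonalizing $N_0$ would serve as the required $U$.
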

\begin{proof}
In the notation of Proposition \ref{prop:normaldistance}, for the input $A_0$ we have
\[
T=(A_0+A_0^T)/2=\begin{bmatrix}
0&60&0\\
60&0&0\\
0&0&0
\end{bmatrix}
\qquad \mathrm{and} \qquad
k=\begin{bmatrix}
-40\\
0\\
0
\end{bmatrix}.
\]
Clearly, the unique symmetric matrix nearest to $A_0$ is $T$, with squared distance
$2\|k\|_2^2=3200$. Moreover, still using the notation of Proposition \ref{prop:normaldistance}, $\|T\|_F^2-\xi$
is the minimum, over $\left\|\begin{bmatrix}
    q_1\\
    q_2\\
    q_3
\end{bmatrix}\right\|_2=1$, of the expression $800 \cdot (9-27q_1^2q_2^2-4q_1^2)$. Adding $2\|k\|_2^2=800 \cdot 4$, denoting $\theta:=q_1^2 \in [0,1]$ and taking into account that $q_2^2 \leq 1-\theta$, we deduce that every nonsymmetric real
normal matrix $N$ parametrized as in \eqref{eq:normalparam} satisfies
\[ \|A_0-N\|_F^2 \geq 800 \cdot (27 \theta^2-31 \theta+13) \geq \frac{88600}{27} \gtrsim 3281 >3200,   \]
where the second inequality is obtained by observing that the quadratic expression in $\theta$ is minimized at $\theta=31/54$. We conclude that $d_\R(A_0)^2=3200.$
It now suffices to exhibit a complex normal matrix closer than $\sqrt{3200}$ to $A_0$.
One is
\[
N_0=\begin{bmatrix}
10i&60&20i\\
60&7i&-9\\
-20i&9&-17i
\end{bmatrix}.
\]
Two direct calculations yield
\[
N_0N_0^*=N_0^*N_0
\qquad \mathrm{and} \qquad
\|A_0-N_0\|_F^2=3160.
\]
\end{proof}

The size of the counterexample in Theorem \ref{thm:normalexample} is minimal. Indeed, for $n=1$ every matrix is normal and hence $\E=\C$ and the question becomes trivial, and
for $n=2$ a real normal matrix nearest to a real $A$ always exists
\cite[Theorem~5.3 and Corollary~5.4]{Higham}. Note also that the proof does
not guarantee that $N_0$ is a nearest complex normal matrix; in fact, this is not true and the numerical algorithm presented in \cite{NZ} finds the nonreal normal matrix 
\[ X \approx \begin{bmatrix}
 - 13.229i  & 52.500   &  -19.843i\\
   52.500 &  - 8.2680i & -13.125\\
   19.843i  & 13.125 &  21.497i 
\end{bmatrix}   \]
which lies at the smaller squared distance $\|A_0-X\|_F^2\approx 3050$.

We now aim to construct, in Corollary \ref{cor:normaldimensions} below, counterexamples in every dimension $n \geq 3$. The basic idea is to construct a direct sum of $A_0$ as in \ref{thm:normalexample} and a multiple of the identity, say $t I_{n-3}$, but it is not obvious that this operation preserves the real distance to normal matrices (because some real normal matrices with nonzero off-diagonal blocks might lie at a smaller distance). In fact, we will need to take $t$ large enough. The first step is the technical Lemma \ref{lem:normalembedding}.

\begin{lemma}\label{lem:normalembedding}
Let $A\in\R^{n\times n}$ and $r\ge1$. For $\F =\R$ or $\C$ and for the distances to the set of normal matrices,
\[
 \lim_{t\to+\infty}d_{\F}(A\oplus tI_r)
 =d_{\F}(A).
\]
\end{lemma}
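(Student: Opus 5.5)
The plan is to sandwich $d_{\F}(A\oplus tI_r)$ between $d_{\F}(A)$ from above, for every $t$, and a quantity whose $\liminf$ is at least $d_{\F}(A)$. The upper bound is immediate. Let $N$ be a normal matrix nearest to $A$ over $\F$. Then $N\oplus tI_r$ is normal, it is real whenever $N$ is, and
\[ \|(A\oplus tI_r)-(N\oplus tI_r)\|_F=\|A-N\|_F=d_{\F}(A). \]
Hence $d_{\F}(A\oplus tI_r)\le d_{\F}(A)$ for all $t$, and it suffices to prove $\liminf_{t\to+\infty} d_{\F}(A\oplus tI_r)\ge d_{\F}(A)$.

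For the lower bound, for each $t$ I choose a nearest normal matrix $M_t$ to $A\oplus tI_r$ over $\F$ (real when $\F=\R$) and partition it conformally as
\[ M_t=\begin{bmatrix} X_t & Y_t\\ Z_t & tI_r+E_t\end{bmatrix}. \]
By the upper bound,
\[ \|A-X_t\|_F^2+\|Y_t\|_F^2+\|Z_t\|_F^2+\|E_t\|_F^2\le d_{\F}(A)^2 . \]
So $X_t,Y_t,Z_t,E_t$ are all bounded uniformly in $t$. The key step is to extract information from the normality equation $M_tM_t^*=M_t^*M_t$ block by block.

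The $(1,2)$ block gives
\[ X_tZ_t^*+Y_t(tI_r+E_t^*)=X_t^*Y_t+Z_t^*(tI_r+E_t). \]
Rearranging,
\[ t\,(Y_t-Z_t^*)=X_t^*Y_t+Z_t^*E_t-X_tZ_t^*-Y_tE_t^*. \]
The right-hand side is bounded, so $Y_t-Z_t^*=O(1/t)$. The $(1,1)$ block gives
\[ X_tX_t^*-X_t^*X_t=Z_t^*Z_t-Y_tY_t^*. \]
Substituting $Y_t=Z_t^*+O(1/t)$ and using boundedness, the right-hand side is $O(1/t)$. Therefore $\|X_tX_t^*-X_t^*X_t\|_F\to0$.

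To conclude I use compactness rather than a quantitative "almost normal implies near normal" estimate, which I expect would be the main obstacle if attempted directly. Take a sequence $t_j\to+\infty$ realizing the $\liminf$. By boundedness, pass to a further subsequence with $X_{t_j}\to X_\infty$. By continuity, $X_\infty X_\infty^*=X_\infty^*X_\infty$, so $X_\infty$ is normal, and $X_\infty$ is real if $\F=\R$ because the real matrices form a closed set. Then
\[ d_{\F}(A)\le\|A-X_\infty\|_F=\lim_j\|A-X_{t_j}\|_F\le\liminf_{t\to+\infty} d_{\F}(A\oplus tI_r), \]
where the last inequality uses $\|A-X_t\|_F\le\|(A\oplus tI_r)-M_t\|_F=d_{\F}(A\oplus tI_r)$. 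Combined with the upper bound, this proves the limit.
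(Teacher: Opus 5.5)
Your proposal is correct and follows essentially the same route as the paper: the upper bound via $N\oplus tI_r$, uniform boundedness of the blocks, the $(1,2)$ block of the normality equation forcing $Y_t-Z_t^*=O(1/t)$, and a compactness argument on the $(1,1)$ block that produces a normal (real when $\F=\R$) limit $X_\infty$ with $d_{\F}(A)\le\|A-X_\infty\|_F\le\liminf_{t\to+\infty} d_{\F}(A\oplus tI_r)$. The only cosmetic difference is that you first show $X_tX_t^*-X_t^*X_t\to0$ and then extract a convergent subsequence of $X_t$ alone, whereas the paper extracts a convergent subsequence of all four blocks and passes to the limit in the $(1,1)$ identity.
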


\begin{proof}
If $N$ is a  normal matrix nearest to $A$, taking $N \oplus tI_r \in \mathcal{E}$ gives the upper
bound $d_\F(A\oplus tI_r) \leq d_\F(A) $. We will now show that
\[ L:=\liminf_{t \to +\infty} d_\F(A \oplus t I_r) \geq d_\F(A),  \]
implying the statement. To this goal, let $(t_k)_k \subset \R$ satisfy 
\[ t_k\to+\infty \qquad \mathrm{and} \qquad d_\F(A\oplus t_k I_r) \to L  ,\] and choose a sequence of normal matrices $(N_k)_k \subset \F^{(n+r) \times (n+r)}$, nearest to $A \oplus t_k I_r$ for all $k \in \N$, and consider their block partition
\[
 N_k=\begin{bmatrix}X_k&Y_k\\ Z_k&t_k I_r+W_k\end{bmatrix}.
\]
Since the distance $\|N_k - (A \oplus t_k I_r)\|_F$ is bounded by $d_\F(A)$, all the blocks $X_k,Y_k,Z_k,W_k$ are bounded. On the other hand,
taking the $(1,2)$ block of
$N_kN_k^*=N_k^*N_k$,
\[
 t_k(Y_k-Z_k^*)
 =X_k^*Y_k+Z_k^*W_k-X_kZ_k^*-Y_kW_k^*,
\]
and hence $Y_k-Z_k^*\to0$. Now consider a convergent subsequence 
\[ \left(\begin{bmatrix}
    X_{k_j} & Y_{k_j} \\ Z_{k_j} & W_{k_j}
\end{bmatrix}\right)_{k_j}  \]. The $(1,1)$ block
of $N_{k_j} N_{k_j}^* = N_{k_j}^* N_{k_j}$ then gives
\[
 X_{k_j}X_{k_j}^*+Y_{k_j}Y_{k_j}^*=X_{k_j}^*X_{k_j}+Z_{k_j}^*Z_{k_j}
\]
which, in the limit $j \to \infty$, implies 
that $X$ is normal, where $X_{k_j} \to X$. Moreover,
\[  \| N_{k_j}-(A \oplus t_{k_j} I_r)\|_F \geq \|A-X_{k_j} \|_F \Rightarrow L \geq \|A-X\|_F \geq d_\F(A),  \]
where in the last step we have taken the limit for $j \to + \infty$.
\end{proof}

\begin{corollary}\label{cor:normaldimensions}
In every dimension $n\ge3$, there is a nonempty open cone of real matrices
with no real nearest complex normal matrix.
\end{corollary}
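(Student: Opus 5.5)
The strategy is to embed the $3\times 3$ counterexample of Theorem~\ref{thm:normalexample} into dimension $n$ by a direct sum with a large multiple of the identity. Lemma~\ref{lem:normalembedding} controls the real distance of this embedding, and the results of Section~\ref{sec:general} then upgrade a single counterexample to an open cone.

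For $n=3$, Theorem~\ref{thm:normalexample} gives $A_0$ with $\dC(A_0)^2\le 3160<3200=\dR(A_0)^2$. Theorem~\ref{thm:robustness} shows that $\mathcal G$ contains a neighbourhood of $A_0$. Corollary~\ref{thm:cone} applies because the set of normal matrices is closed, conjugation invariant, contains real matrices and is invariant under positive scaling. Hence $\mathcal G$ is a nonempty open cone.

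For $n\ge4$, set $r=n-3$ and $A_t:=A_0\oplus tI_r$. The upper bound is immediate: if $N$ is a complex normal matrix nearest to $A_0$, then $N\oplus tI_r$ is normal, so $\dC(A_t)\le\dC(A_0)$ for every $t$. For the lower bound, Lemma~\ref{lem:normalembedding} with $\F=\R$ gives $\dR(A_t)\to\dR(A_0)=\sqrt{3200}$ as $t\to+\infty$. Since $\dC(A_0)\le\sqrt{3160}<\sqrt{3200}$, we can choose $t$ large enough that
\[
\dR(A_t)>\sqrt{3160}\ge\dC(A_0)\ge\dC(A_t),
\]
so $A_t\in\mathcal G$. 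As before, Theorem~\ref{thm:robustness} and Corollary~\ref{thm:cone} show that $\mathcal G\subseteq\R^{n\times n}$ is a nonempty open cone. Every $A\in\mathcal G$ satisfies $\dC(A)<\dR(A)$, so no complex normal matrix nearest to $A$ is real, which is the claim.

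The only delicate step is the lower bound on $\dR(A_t)$. Real normal matrices with nonzero off-diagonal blocks might in principle come closer to $A_t$ than $N\oplus tI_r$, and this is exactly the issue Lemma~\ref{lem:normalembedding} settles in the limit $t\to+\infty$. Because the lemma gives only a limit and not an explicit threshold on $t$, the argument is existential in $t$, which is all the statement requires. Everything else is a direct invocation of earlier results.
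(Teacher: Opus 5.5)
Your proposal is correct and follows the paper's proof essentially step for step. You use Theorem~\ref{thm:normalexample} for $n=3$, the embedding $A_0\oplus tI_{n-3}$ with complex distance at most $\sqrt{3160}$ and real distance tending to $\sqrt{3200}$ by Lemma~\ref{lem:normalembedding}, and then Theorem~\ref{thm:robustness} and Corollary~\ref{thm:cone} to obtain the open cone.
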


\begin{proof}
For $n=3$, use Theorem~\ref{thm:normalexample}. For $n>3$, the complex distance
to $A_0\oplus tI_{n-3}$ is at most $\sqrt{3160}$, whereas for $t \to +\infty$ its real distance
tends to $\sqrt{3200}$ by Lemma~\ref{lem:normalembedding}. Invoking Theorem~\ref{thm:robustness} and Corollary \ref{thm:cone} completes the
proof.
\end{proof}

Suppose that we randomly draw an input $A$ from the uniform distribution on the unit sphere $\|A\|_F=1$. By Corollary~\ref{cor:normaldimensions} and Corollary~\ref{thm:cone}, as long as the dimension is $n \geq 3$, there is a positive probability that $A$ does not admit any real nearest normal matrix. We next show that the probability is also strictly lower than $1$, or equivalently that the probability that $A$ has a real nearest normal matrix is also positive. In essence, Theorem \ref{thm:normalpositive} is a corollary of a sufficient condition due to A. Ruhe \cite{Ruhe} (see also
\cite[Section~5]{Higham}) for a normal matrix to be one that is nearest to $A$. The criterion says that, if $N$ is normal and there exists a Hermitian matrix $H$ satisfying (i) $\gamma:=\lambda_{\max}(H)-\lambda_{\min}(H) \leq 1$ (ii) $N-A=NH-HN$, then $N \in \PP(A)$ for the nearest normal matrix problem, i.e., $N$ is a normal matrix nearest to $A$. The proof is easy; if $Y$ is normal and $\Delta=Y-N$, then some algebraic manipulations reveal that
\[ \| A - Y\|_F^2 - \|A-N\|_F^2 = \|\Delta\|_F^2 -\tr(H(\Delta^* \Delta-\Delta\Delta^*)) \geq (1-\gamma) \| \Delta \|_F^2 \geq 0.     \]
Note also that, if we strengthen the assumption to $\gamma < 1$, the same argument also shows that $N$ is the unique normal matrix nearest to $A$.
\begin{theorem}\label{thm:normalpositive}
Let $A\in\R^{n\times n}$, with $n\geq2$, and write
\[
 S=\frac{A+A^T}{2},\qquad K=\frac{A-A^T}{2}.
\]
Suppose that $S$ has distinct eigenvalues $\{ \lambda_i(S)\}_{i=1}^n$, and define its eigenvalue gap $\delta$ as
\[
 \delta:=\min_{i\neq j}|\lambda_i(S)-\lambda_j(S)|.
\]
If $\|K\|_F\leq \frac{\delta}{\sqrt{2}}$, then $S \in \PP(A)$ is a nearest normal
matrix to $A$ over $\C^{n\times n}$. If the inequality is strict,
then $\PP(A)=\{ S \}$, i.e., $S$ is the unique minimizer.
\end{theorem}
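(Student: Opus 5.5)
We will apply Ruhe's criterion recalled just before the statement, with the candidate $N=S$. Since $S$ is real symmetric, it is normal. What remains is to build a Hermitian $H$ with $SH-HS=S-A=-K$ and with spread $\gamma=\lambda_{\max}(H)-\lambda_{\min}(H)\le 1$. For the uniqueness claim we want $\gamma<1$ when the inequality is strict.

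To construct $H$, diagonalize $S=Q\Lambda Q^T$ with $Q$ real orthogonal and $\Lambda=\mathrm{diag}(\lambda_1,\dots,\lambda_n)$ with distinct entries. Set $\widetilde K=Q^TKQ$, which is real skew-symmetric. The equation $SH-HS=-K$ becomes $\Lambda\widetilde H-\widetilde H\Lambda=-\widetilde K$ with $\widetilde H=Q^THQ$. Entrywise this reads $(\lambda_i-\lambda_j)\widetilde h_{ij}=-\widetilde k_{ij}$. Because the diagonal of $\widetilde K$ vanishes and the eigenvalues are distinct, this is solvable: take $\widetilde h_{ii}=0$ and $\widetilde h_{ij}=-\widetilde k_{ij}/(\lambda_i-\lambda_j)$ for $i\ne j$. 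Skew-symmetry of $\widetilde K$ together with antisymmetry of $\lambda_i-\lambda_j$ makes $\widetilde H$ real symmetric, hence Hermitian.

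Next we bound the spread. Because $\tr\widetilde H=0$, the eigenvalues $\mu_i$ of $\widetilde H$ sum to zero. Then $\gamma^2=(\mu_{\max}-\mu_{\min})^2\le 2(\mu_{\max}^2+\mu_{\min}^2)\le 2\|\widetilde H\|_F^2$; a sharper bound may be possible, but this one should suffice. We also have $\|\widetilde H\|_F^2=\sum_{i\ne j}\widetilde k_{ij}^2/(\lambda_i-\lambda_j)^2\le \|K\|_F^2/\delta^2$. Combining these gives $\gamma\le \sqrt2\,\|K\|_F/\delta\le 1$ under the hypothesis, and $\gamma<1$ when the hypothesis holds strictly. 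Ruhe's criterion then gives $S\in\PP(A)$ in the first case and $\PP(A)=\{S\}$ in the second.

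The main obstacle is getting the constant exactly right in the step from Frobenius norm to spread. The inequality $(\mu_{\max}-\mu_{\min})^2\le 2(\mu_{\max}^2+\mu_{\min}^2)\le 2\sum_i\mu_i^2$ is elementary, and it produces exactly the threshold $\delta/\sqrt2$ in the statement. So no refinement is needed. We only need to check that the strict inequality propagates, which it does because the other inequalities in the chain are non-strict.
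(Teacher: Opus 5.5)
Your proposal is correct and follows essentially the same route as the paper: diagonalize $S$ orthogonally, solve the commutator equation entrywise for a real symmetric $H$ with zero diagonal (your convention $SH-HS=-K$ is the paper's $HS-SH=K$), bound the spread by $\gamma\le\sqrt2\|H\|_F\le\sqrt2\|K\|_F/\delta$, and invoke Ruhe's criterion, with $\gamma<1$ giving uniqueness. The only difference is that you justify $\gamma\le\sqrt2\|H\|_F$, which the paper states without proof; your trace-zero remark is not needed for it, whereas $n\ge2$ (so that $\mu_{\max}$ and $\mu_{\min}$ can be taken at distinct indices) is.
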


\begin{proof}
With no loss of generality up to applying an orthogonal similarity to $A,S,K$, we may assume that
$S$ is diagonal so that $\lambda_i(S)=S_{ii}$ for $i=1,\dots,n$. Define the real symmetric
matrix $H$ as
\[   H_{ij} = \begin{cases}
    0 \ &\mathrm{if} \ i=j;\\
    \frac{K_{ij}}{S_{jj}-S_{ii}} \ &\mathrm{if} \ i\neq j.
\end{cases}  \]
Then $HS-SH=K$ and $\|H\|_F\leq\|K\|_F/\delta$.
Denote the largest and smallest eigenvalues
of $H$ by, respectively, $\alpha:=\lambda_{\max}(H)$ and $\beta:=\lambda_{\min}(H)$ and let $\gamma:=\alpha-\beta$. Hence, $\gamma \leq \sqrt{2} \|H\|_F \leq \sqrt{2} \|K\|_F/\delta \leq 1$. The statement then follows by applying Ruhe's sufficient criterion \cite{Higham,Ruhe}.
\end{proof}

By Theorem\ref{thm:normalpositive}
there exists a nonempty open subset of $\R^{n\times n}$ of inputs admitting a unique nearest normal matrix which is  real; for example it suffices to take a sufficiently small open ball centered around a real diagonal matrix with distinct
eigenvalues. If we normalize such diagonal matrix to have Frobenius norm $1$, the intersection of the small ball with the unit
sphere is therefore nonempty and relatively open, and thus
has positive measure. Combining this observation with
Corollary~\ref{cor:normaldimensions} and Corollary~\ref{thm:cone},
we obtain, for every $n \geq 3$ and for an input $A \in \R^{n \times n}$ sampled uniformly from the sphere $\|A\|_F=1$,
\[
 0<\mathbb P\bigl(\dR(A)=\dC(A)\bigr)<1.
\]

\section{Nearest stable matrices}\label{sec:stable}

Letting $\C_- :=\{z\in\C:\Re z\le0\}$ and $\Lambda(X)$ denote the eigenvalue set of the square matrix $X$, in this section we consider the target set
\[
 \E=\{X\in\C^{n\times n}:\Lambda(X)\subseteq\C_-\}.
\]
It is clear that this set is closed and satisfies \eqref{eq:Eprop}. We follow \cite{NP} by defining Hurwitz stability using the closed left half plane; in particular, matrices with defective eigenvalues on the
imaginary axis are included.

The main aim of this section is to disprove that every real $A$ has a real matrix in the set $\PP(A) \subseteq \E$ of stable matrices nearest to $A$. We will give a family of counterexamples, relying on a very simple idea. If
$B\in\E\cap\R^{n\times n}$, then
\begin{equation}\label{eq:detsign}
 (-1)^n\det B\ge0.
\end{equation}
Indeed, the nonreal eigenvalues of a real Hurwitz stable matrix occur in conjugate pairs with positive
product, and the real eigenvalues are nonpositive; since the number of real eigenvalues has the same parity of $n$, \eqref{eq:detsign} follows. An immediate consequence of this observation is that, if $A$ is a real matrix such that its determinant has the opposite sign, a singular matrix must lie on the segment between $A$ and $B$.

The main result in  this section is Theorem \ref{thm:stable}, and we state and prove it for distances induced by an arbitrary unitarily invariant norm on
$\C^{n\times n}$. Let us introduce some notation to this goal: Throughout this section, $\nu$ denotes a unitarily invariant norm on $\C^{n \times n}$, and we may also write (with slight abuse of notation) $\nu(X):=\nu(X \oplus 0_{k \times k})$ for a square matrix $X \in \C^{(n-k)\times (n-k)}$. We also define the positive constant $c_\nu= \nu( e_1e_1^T)$, and we denote by $d_{\R,\nu}$ (resp. $d_{\C,\nu}$) the
functions that associate a real matrix $A$ to its real (resp. complex) distance to $\E$, where the distance is the one induced by $\nu$.
Below in this section, we will freely use the standard facts 
\begin{equation}\label{eq:uinorm}
 \nu(X)\ge c_\nu\|X\|_2,
 \qquad
 \nu(X)=c_\nu\|X\|_F\quad\text{if }\rank X=1,
\end{equation}
where $\|\cdot\|_2$ is the spectral norm. The (in)equalities in \eqref{eq:uinorm} follow by the fact that a unitarily invariant norm is always a symmetric gauge function of the singular values \cite[Theorem 3.5.18]{HJtopics}.

\begin{theorem}\label{thm:stable}
For every $m\geq 4$ and $n\geq 3$, let
\[
 A_m=\begin{bmatrix}
 -m^4&m^2&0\\0&0&m^2\\m^6&1&0
 \end{bmatrix},
 \qquad
 A_{m,n}=A_m\oplus(-m^2I_{n-3}) \in \R^{n \times n}.\]

Then, for every unitarily invariant norm $\nu$,

 \[d_{\R,\nu}(A_{m,n})=c_\nu m^2,
 \qquad and \qquad 
 0<d_{\C,\nu}(A_{m,n})\le c_\nu\left(1+\sqrt{8m^2+9}\right)\]

so that
\[
 \frac{d_{\R,\nu}(A_{m,n})}{d_{\C,\nu}(A_{m,n})}
 \ge\frac{m^2}{1+\sqrt{8 m^2+9}}\longrightarrow+\infty.
\]
In particular, for every $m,n$ in the given range, no complex stable matrix nearest to $A_{m,n}$ is real.
\end{theorem}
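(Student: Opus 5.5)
The plan is to get the lower bound on $d_{\R,\nu}$ from the determinant obstruction \eqref{eq:detsign}, to get the matching upper bound from an explicit real stable rank-one perturbation, and to get the bound on $d_{\C,\nu}$ from an explicit nonreal stable matrix built as a sum of two rank-one perturbations.

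\emph{Sign of the determinant.} First I compute the characteristic polynomial of $A_m$, expanding $\det(\lambda I-A_m)$ along the first row:
\[
p_m(\lambda)=\lambda^3+m^4\lambda^2-m^2\lambda-m^6-m^{10}.
\]
Hence $\det A_m=m^6+m^{10}>0$, and so
\[
(-1)^n\det A_{m,n}=(-1)^n(-m^2)^{n-3}\det A_m=-m^{2(n-3)}\det A_m<0 .
\]
By \eqref{eq:detsign}, every real stable $B$ satisfies $(-1)^n\det B\ge0$. The segment from $A_{m,n}$ to $B$ is real, and $t\mapsto\det$ changes sign along it, so it contains a singular matrix $C$.

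\emph{Lower bound on the real distance.} Along the segment, $\nu(A_{m,n}-B)\ge \nu(A_{m,n}-C)$. By \eqref{eq:uinorm},
\[
\nu(A_{m,n}-C)\ge c_\nu\|A_{m,n}-C\|_2\ge c_\nu\,\sigma_{\min}(A_{m,n}).
\]
Since $\sigma_{\min}(A_{m,n})=\min\{\sigma_{\min}(A_m),m^2\}$, it remains to show $\sigma_{\min}(A_m)\ge m^2$, i.e.\ $\|A_m^{-1}\|_2\le m^{-2}$. I would compute $A_m^{-1}$ explicitly via the adjugate, with $\det A_m=m^6(1+m^4)$. Then I would bound its spectral norm, e.g.\ by its Frobenius norm or by a row/column-sum estimate, using $m\ge4$. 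This verification is where the restriction $m\ge4$ should enter. I expect it to be routine but fiddly, and if the Frobenius bound is too crude I would bound $\|A_m^{-1}\|_2$ via $\sqrt{\|A_m^{-1}\|_1\|A_m^{-1}\|_\infty}$.

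\emph{Upper bound on the real distance.} Replace the $(2,3)$ entry $m^2$ of $A_m$ by $0$. The resulting matrix has zero third column, so its eigenvalues are $0$ together with those of $\begin{bmatrix}-m^4&m^2\\0&0\end{bmatrix}$, namely $-m^4$ and $0$. It is therefore real and stable, and the block $-m^2I_{n-3}$ is already stable. The perturbation has rank one with Frobenius norm $m^2$, so by \eqref{eq:uinorm} its $\nu$-norm is $c_\nu m^2$. This gives $d_{\R,\nu}(A_{m,n})=c_\nu m^2$.

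\emph{Complex distance.} Positivity of $d_{\C,\nu}(A_{m,n})$ holds because $p_m(0)<0$, so $A_m$ has a positive real eigenvalue, $A_{m,n}\notin\E$, and $\E$ is closed. For the upper bound I would exhibit a nonreal stable matrix $A_m+E_1+E_2$, with $E_1$ of rank one and Frobenius norm $1$, and $E_2$ of rank one and Frobenius norm $\sqrt{8m^2+9}$. The triangle inequality and \eqref{eq:uinorm} then give $\nu(E_1+E_2)\le c_\nu(1+\sqrt{8m^2+9})$, and the bound passes to $A_{m,n}$ via the direct sum. The idea is to modify the small entries, namely the $(3,2)$ entry $1$ and the entries in the third row or column, by complex amounts. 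These modifications change the coefficients $-m^2=-ce$ and $-m^6-m^{10}$ of $p_m$ into complex coefficients for which the new cubic factors with all roots in $\C_-$. For example, I would aim for roots close to $-m^4$ and two roots $\approx \pm i\,m^{3}$ shifted slightly left; with complex entries these need not form a conjugate pair, which is exactly what the real determinant obstruction forbids.

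\emph{Main obstacle.} Finding this explicit complex perturbation with exactly the norm $\sqrt{8m^2+9}$ and verifying stability is the main obstacle. I would first parametrize the perturbed cubic, match coefficients to a chosen factorization $(\lambda-r_1)(\lambda-r_2)(\lambda-r_3)$, and then check the real parts of the roots directly. The final ratio bound and the claim that no complex nearest matrix is real both follow at once, since $m^2>1+\sqrt{8m^2+9}$ for $m\ge4$, giving $d_{\C,\nu}<d_{\R,\nu}$.
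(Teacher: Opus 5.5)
Your real-distance argument (the sign of $\det A_{m,n}$, a singular matrix on the segment, and the rank-one real stable competitor $B_m$) is the paper's. However, the step $\sigma_{\min}(A_m)\ge m^2$ will not go through as you plan it, because it is an equality. The columns of $A_m$ are mutually orthogonal, so $A_m^TA_m=\mathrm{diag}(m^{12}+m^8,\,m^4+1,\,m^4)$ and $\|A_m^{-1}\|_2=m^{-2}$ exactly. Any non-sharp estimate of $\|A_m^{-1}\|_2$ therefore yields a lower bound strictly below $m^2$, which cannot prove $d_{\R,\nu}=c_\nu m^2$. For instance, $\|A_m^{-1}\|_F^2=m^{-4}+(m^4+1)^{-1}+(m^{12}+m^8)^{-1}>m^{-4}$. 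Likewise, the second row of $A_m^{-1}$ has absolute sum $(m^2+1)/(m^4+1)>m^{-2}$, so $\sqrt{\|A_m^{-1}\|_1\|A_m^{-1}\|_\infty}>m^{-2}$ too. Computing $A_m^TA_m$ settles the step in one line. Also, $m\ge4$ plays no role here; it is needed only at the end, for $(m^2-1)^2>8m^2+9$.

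The more serious gap is the complex bound, which is the real content of the theorem. You never produce the nonreal stable matrix, and the example target you suggest cannot be reached within the budget. Roots near $-m^4$ and $\pm i m^3$ form an approximately conjugation-symmetric spectrum with product $\approx -m^{10}$, whereas $\det A_m=m^{10}+m^6$. Since the cofactors of $A_m$ are $O(m^8)$ and its entries are $O(m^6)$, a perturbation of norm $O(m)$ moves the determinant by only $O(m^9)$. This is just the real obstruction \eqref{eq:detsign} again. The missing idea is a spectrum that is genuinely not closed under conjugation, so that $\det$ stays near $+m^{10}$. An example is a double eigenvalue $im^3$ on the imaginary axis (allowed, since $\C_-$ is closed) together with $-m^4-2im^3$. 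The paper realizes this with
\[
C_m=\begin{bmatrix}-m^4&m^2+2mi&2mi-3\\0&0&m^2\\m^6&0&0\end{bmatrix},\qquad \det(zI_3-C_m)=(z-im^3)^2(z+m^4+2im^3).
\]
Here the huge entry $(3,1)=m^6$ turns the $O(m)$ change in entry $(1,3)$ into the required linear coefficient $3m^6-2im^7$. The change $2mi$ in entry $(1,2)$ produces the term $-2im^9$ in the constant coefficient. Finally, $A_m-C_m$ is the sum of the rank-one first row $[\,0,\ -2mi,\ 3-2mi\,]$, of Frobenius norm $\sqrt{8m^2+9}$, and $e_3e_2^T$. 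Applying \eqref{eq:uinorm} to each term gives $\nu(A_m-C_m)\le c_\nu(1+\sqrt{8m^2+9})$.
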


\begin{proof}
The columns of $A_m$ are orthogonal, and
\[
 A_m^TA_m= \begin{bmatrix}
     m^8+m^{12}&0&0\\
     0&m^4+1&0\\
     0&0&m^4
 \end{bmatrix}.   
\]
Thus $\sigma_{\min}(A_{m,n})=m^2$. Moreover,
$\det A_m=m^6(m^4+1)>0$, and therefore $(-1)^n\det A_{m,n}<0$. Let now $B$ be a real Hurwitz stable matrix. Then, by the continuity of the determinant and using
\eqref{eq:detsign}, on the segment between $A_{m,n}$ and $B$ there exists a singular matrix
$D=sA_{m,n}+(1-s)B$, where $0 \leq s < 1$. Let $v$ be a unit vector in $\ker D$.
Then,
\[
 \|A_{m,n}-D\|_2\ge\|(A_{m,n}-D)v\|_2
 =\|A_{m,n}v\|_2\ge m^2.
\]
Using the homogeneity of norms and \eqref{eq:uinorm}, we get
\[
 \nu(A_{m,n}-B)\ge(1-s)\nu(A_{m,n}-B)=\nu(A_{m,n}-D)\ge c_\nu m^2.
\]
Moreover, equality is attained at $B_m\oplus(-m^2I_{n-3})$, where
\[
 B_m=\begin{bmatrix}
 -m^4&m^2&0\\0&0&0\\m^6&1&0
 \end{bmatrix}.
\]
Indeed, $\det(zI_3-B_m)=z^2(z+m^4)$ and $A_m-B_m=m^2e_2e_3^T$. This shows that $d_{\R,\nu}(A_{m,n})=c_\nu m^2$.

We now exhibit a nonreal stable matrix closer to $A_{m,n}$ than $d_{\R,\nu}(A_{m,n})$. Consider
\[
 C_m=\begin{bmatrix}
 -m^4&m^2+2mi&2mi-3\\
 0&0&m^2\\m^6&0&0
 \end{bmatrix}.
\]
By an explicit calculation, we see that
\[
 \det(zI_3-C_m)=(z-im^3)^2(z+m^4+2im^3).
\]
Hence, the eigenvalues of $C_m$ have nonpositive real part and thus $C_m$ is Hurwitz stable. Moreover, a straightforward computation yields $\|A_m-C_m\|_F^2 = 8m^2+10$. For an arbitrary unitarily invariant norm we apply 
 \eqref{eq:uinorm} twice, together with the triangle inequality, to obtain the bound
 \[  \nu(A_m-C_m) \leq \nu\left( \begin{bmatrix}
   0&-2mi&3-2mi\\
   0&0&0\\
   0&0&0
 \end{bmatrix} \right) + \nu\left( \begin{bmatrix}
     0&0&0\\
     0&0&0\\
     0&1&0
 \end{bmatrix} \right) = c_\nu \left(1+\sqrt{8m^2+9}\right).   \]
To conclude the proof, we observe that $m \geq 4$ and hence
\[ m^4 > 11 m^2 > 10m^2+8 \Rightarrow (m^2-1)^2 > 8m^2+9,    \]
and thus $C_m \oplus (-m^2 I_{n-3})$ is closer to $A_{m,n}$ than $d_{\R,\nu}(A_{m,n})$.
\end{proof}

Similarly to the proof of Theorem \ref{thm:normalexample}, also the proof of Theorem \ref{thm:stable} does not establish that $C_m \oplus (-m^2 I_{n-3})$ is necessarily a complex minimizer of the distance. And indeed, this is not the case. For example, for $n=3$ and $m=4$ the algorithm of \cite{NP} finds the nonreal stable matrix

\[ X \approx 10^3 \cdot \begin{bmatrix}
    -0.2560 + 0.0001i  & 0.0134 + 0.0022i & -0.0019 + 0.0065i\\
  -0.0000 + 0.0000i & -0.0013 - 0.0007i  & 0.0134 + 0.0022i\\
   4.0960 + 0.0000i &  0.0008 + 0.0002i & -0.0001 + 0.0005i
\end{bmatrix}, \]
which lies at a Frobenius distance $\|A_{4,3}-X\|_F \approx  8.4884 < 11.747 \approx \|A_{4,3}-C_4\|_F$.

Another analogy with the nearest normal problem is that, also for the nearest stable problem, there is no counterexample in dimension $n < 3$. To see why, let us specialize our analysis to the Frobenius norm. It is clear that if $n=1$ then either a real number $a \leq 0$, and then the nearest stable number is $a$ itself, or $a > 0$ and then the nearest stable number is $0$. The case of $n=2$ is much more involved, and we treat it in Proposition \ref{prop:stablen2}; we first prove the simple Lemma \ref{lem:lb}.

\begin{lemma}\label{lem:lb}
    For every $M \in \C^{2 \times 2}$, it holds $| \tr(M)| \leq \sqrt{2} \|M\|_F$.
\end{lemma}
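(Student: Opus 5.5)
The claim is that for every $M \in \C^{2\times 2}$ the trace satisfies $|\tr(M)|\le \sqrt{2}\,\|M\|_F$. The plan is to view the trace as a Frobenius inner product and then apply the Cauchy--Schwarz inequality in $\C^{2\times 2}$.

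First, write $\tr(M)=m_{11}+m_{22}$. This is the standard complex inner product of the vector $(m_{11},m_{22})$ with $(1,1)$, or equivalently $\tr(I_2^* M)$.

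Second, apply the Cauchy--Schwarz inequality in $\C^2$:
\[
|m_{11}+m_{22}|\le \sqrt{2}\,\sqrt{|m_{11}|^2+|m_{22}|^2}.
\]

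Third, drop the off-diagonal entries, which only enlarges the right-hand side:
\[
\sqrt{|m_{11}|^2+|m_{22}|^2}\le \Bigl(\sum_{i,j}|m_{ij}|^2\Bigr)^{1/2}=\|M\|_F .
\]
Combining the two displays gives the bound, with constant $\|I_2\|_F=\sqrt2$.

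There is no genuine obstacle. The one point needing care is that the paper's inner product $\ip{X}{Y}=\Re\tr(Y^*X)$ is real, so it only controls $\Re\tr(M)$. The argument therefore uses the complex Cauchy--Schwarz inequality on the diagonal entries directly. Alternatively, one can apply the real inequality to $e^{-i\phi}M$, where $\phi=\arg\tr(M)$; this rotation makes the trace real and nonnegative and leaves $\|M\|_F$ unchanged.
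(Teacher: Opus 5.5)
Your proof is correct and is essentially the paper's argument. Like the paper, you apply Cauchy--Schwarz in $\C^2$ to the vectors $(1,1)$ and $(m_{11},m_{22})$, then bound $\sqrt{|m_{11}|^2+|m_{22}|^2}$ by $\|M\|_F$ by discarding the off-diagonal entries.
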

\begin{proof}
    By the Cauchy-Schwarz inequality,
    \[ |M_{11}+M_{22}| \leq  \left\| \begin{bmatrix}
        1\\
        1
    \end{bmatrix}\right\|_2 \cdot \left\| \begin{bmatrix}
        M_{11}\\
        M_{22}
    \end{bmatrix}\right\|_2 \leq \sqrt{2} \|M\|_F.   \]
\end{proof}

\begin{proposition}\label{prop:stablen2}
    Let $\E \subset \C^{2 \times 2}$ be the set of Hurwitz stable matrices, and let $\dR$ and $\dC$ be as in \eqref{eq:distances}. Then, for every $A \in \R^{2 \times 2}$, it holds $ \dR(A)=\dC(A)$.
\end{proposition}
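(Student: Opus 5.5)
The plan is to reduce the comparison to a complex minimizer $B\in\PP(A)$ of the Frobenius distance and to show that, in dimension $2$, one can always replace $B$ by a real stable matrix that is no farther from $A$. If $A$ is itself stable there is nothing to prove, so assume $A\notin\E$. For a real $2\times 2$ matrix, Hurwitz stability is equivalent to $\tr X\le 0$ and $\det X\ge0$. Hence
\[
\dR(A)=\min\{\|A-X\|_F : X\in\R^{2\times2},\ \tr X\le0,\ \det X\ge0\},
\]
and this is what the complex problem has to be compared with.

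\emph{Step 1 (boundary structure of a complex minimizer).} Let $B\in\PP(A)$ with eigenvalues $\mu_1,\mu_2$. Since $A\notin\E$ and $B$ is nearest, $B$ lies on $\partial\E$. Therefore one eigenvalue is $\mu_1=i\omega$ with $\omega\in\R$, and $\Re\mu_2=\Re\tr B-\Re\mu_1\le 0$, i.e. $\Re\tr B\le0$. I split into the two cases $\omega=0$ and $\omega\ne0$.

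\emph{Case $\omega=0$.} Then $B$ is singular with $\Re\tr B\le0$. Consider $\Re B$, which is a real matrix with $\tr\Re B\le0$. The lower bound $\|A-B\|_F\ge\sigma_{\min}(A)$ holds, and it is attained by the real rank-one correction $A-\sigma_{\min}u v^T$ (Eckart--Young). This correction is stable whenever its trace is nonpositive. If it is not, then the trace constraint is active as well. In that case I would use Lemma~\ref{lem:lb} on $M=A-B$: it gives $\|A-B\|_F\ge |\tr(A)-\Re\tr B|/\sqrt2\ge \tr(A)/\sqrt2$. The goal is then to show that the real problem with both constraints active (singular and trace $\le 0$) has the same value as its complex counterpart. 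This is a small explicit problem over matrices with eigenvalues $\{0,\,t\}$, $t\le0$, and I expect it to be handled by writing $B$ in Schur form $U^*\!\begin{bmatrix}0&b\\0&t\end{bmatrix}\!U$ and minimizing.

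\emph{Case $\omega\neq0$ (main obstacle).} Here $\det(B-i\omega I)=0$ and $\Re\tr B\le0$. Since $\E$ is conjugation invariant, $\overline B\in\PP(A)$ as well, with eigenvalue $-i\omega$. The idea is to take $R:=\Re B=(B+\overline B)/2$, which satisfies $\|A-R\|_F\le\|A-B\|_F$ by \eqref{eq:pythagoras}, and to prove that $R$ is stable. Its trace is $\Re\tr B\le0$, so only $\det R\ge0$ remains. For $2\times2$ matrices the polarization identity
\[
\det\tfrac{X+Y}{2}=\tfrac14\bigl(\det X+\det Y+\tr X\tr Y-\tr(XY)\bigr)
\]
applied with $X=B$ and $Y=\overline B$ expresses $\det R$ through $\det B=i\omega\mu_2$, its conjugate, and $\tr(B\overline B)$. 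The hope is that $\det R\ge0$ follows from $\Re\mu_2\le0$ together with the first-order optimality of $B$. The optimality condition is the key input: $A-B$ is orthogonal to the tangent space of the constraint $\det(X-i\omega I)=0$, which for $2\times 2$ matrices gives $A-B$ proportional to $\mathrm{adj}(B-i\omega I)^{*}$ plus a multiple of $I$ coming from the trace constraint. This proportionality should pin down $\Im B$ enough to control $\tr(B\overline B)$.

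If this direct route fails, the fallback is to compute both minima explicitly. I would parametrize $B$ by the Schur form $U^*\!\begin{bmatrix}i\omega&b\\0&\mu_2\end{bmatrix}\!U$, optimize over $U$ in closed form (a $2\times2$ unitary has few parameters), and check that the optimal value is bounded below by the value of the real competitor $\begin{bmatrix}x&y\\z&-x\end{bmatrix}$ with $x^2+yz\le0$, i.e.\ a real matrix with eigenvalues $\pm i\omega'$. This computational comparison in the case $\omega\ne0$ is the step I expect to require the most care.
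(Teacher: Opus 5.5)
Your proposal is a plan rather than a proof: the two cases that carry all the difficulty are left open. In Case $\omega=0$ you settle only the subcase in which the Eckart--Young correction has nonpositive trace. When its trace is positive, the claim that ``the trace constraint is active'' is unjustified, and the two lower bounds you have are not sharp. For $A=\mathrm{diag}(3,-1)$ the correction $\mathrm{diag}(3,0)$ has positive trace, and $\dR(A)^2=\dC(A)^2=6$, attained by the real nilpotent matrix $\begin{bmatrix}1&1\\-1&-1\end{bmatrix}$. Your bounds give only $\sigma_{\min}(A)^2=1$ and $(\tr A)^2/2=2$. Moreover, this subcase genuinely contains nonreal minimizers. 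For instance, the nilpotent $B=\begin{bmatrix}1&i\\ i&-1\end{bmatrix}$ satisfies $\|A-B\|_F^2=6$, while $\Re B=\mathrm{diag}(1,-1)$ is unstable. So no argument of the form ``replace $B$ by something real built from $B$'' can work there; you must compare optimal values. In Case $\omega\neq0$, note that by \eqref{eq:pythagoras} proving $\Re B$ stable is equivalent to proving that the minimizer $B$ is real. The polarization-identity and first-order-condition route is stated only as a hope, and nothing in it yields $\det\Re B\ge0$.

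The missing idea is to split according to the spectrum of $A$ rather than that of an unknown minimizer, and then to use a specific structure of the Schur parametrization that your fallback does not identify. After a real orthogonal similarity, $A=tI_2+\begin{bmatrix}h&k\\-k&-h\end{bmatrix}$ with $h,k\ge0$.

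If $h\le k$, so that $A$ has nonreal or repeated eigenvalues, the trace bound from Lemma~\ref{lem:lb} that you already wrote is sharp. It is attained by the real stable matrix $A-tI_2$.

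If $h>k$, then $\dC(A)^2=\min_U\phi(U)$ with $\phi$ as in \eqref{eq:phiU}. A direct computation gives
$\phi(U)=(h-kx)^2+(k^2-h^2)z^2+(t+hz)_+^2+(t-hz)_+^2$, where $(x,z)$ are two Bloch coordinates of $Ue_1$. These fill the closed unit disk as $U$ ranges over unitary matrices. Real orthogonal $U$ give exactly the boundary circle, and for them the optimal triangular factor, hence the resulting stable matrix, is real. Since $h-kx\ge h-k>0$, the objective is nonincreasing in $x$. So the minimum over the disk is attained at $x=\sqrt{1-z^2}$, on the circle, which gives $\dR(A)\le\dC(A)$.

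Without the observation that the objective depends on $U$ only through a point of this disk, with real $U$ corresponding precisely to its boundary, ``optimizing over $U$ in closed form'' does not by itself produce the comparison you need.
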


\begin{proof}
    We may assume $A \not\in \E$, for otherwise there is nothing to prove. With no loss of generality (see \cite{NP}), we may apply an orthogonal similarity to $A$ that diagonalizes its symmetric part. Since there is freedom in choosing the sign of the orthonormal eigenvectors, this implies that we may assume
\[
 A=t I_2 + \begin{bmatrix}h&k\\-k&-h\end{bmatrix},
 \qquad t\in\R,\quad h,k\geq0.
\]

Suppose first that $h\leq k$. Then, the eigenvalues of $A$ are
$t\pm i\sqrt{k^2-h^2}$, and hence $t>0$ because $A$ is not stable.
The candidate real minimizer $B=A-tI_2$ in \cite[Lemma~5.5]{NP} is stable and
satisfies $\|A-B\|_F=\sqrt{2}\,t$. On the other hand, for every complex Hurwitz
stable matrix $X \in \E$, it holds
\[
 \|A-X\|_F
 \geq \frac{ |\tr(A-X)|}{\sqrt{2}}
 \geq \frac{\Re\tr(A-X)}{\sqrt{2}
 }
 \geq \sqrt{2}t,
\]
having used Lemma \ref{lem:lb} for the first inequality and the fact that $\Re\tr X\leq0$ for the last inequality.
Hence, $B$ is also a complex minimizer.

Suppose now that $h>k$. In this case, $A-tI_2$ has the
positive eigenvalue $\sqrt{h^2-k^2}$ and it is therefore not stable.
The remaining candidates in
\cite[Lemma~5.5]{NP} are singular and have real eigenvalues; in particular, all of them have
a real upper triangular Schur form. For a unitary matrix $U$, write $M=U^*AU$ and define
\begin{equation}\label{eq:phiU}
    \phi(U):=|M_{21}|^2+[(\Re M_{11})_+]^2+[(\Re M_{22})_+]^2,
\end{equation}
where $a_+:=\max\{a,0\}.$
By \cite[Lemma~3.1]{NP}, $\phi(U)$ in \eqref{eq:phiU} is the squared distance
from $M$ to the set of upper triangular Hurwitz stable matrices. Therefore, following the arguments in \cite{NP}, it must be that
\[
 \dC(A)^2=\min_{U^*U=I_2}\phi(U),
 \quad \mathrm{and} \quad 
 \dR(A)^2=\min_{\substack{U^TU=I_2\\U\in\R^{2\times2}}}\phi(U).
\]

Define now $U e_1 =:\begin{bmatrix}
    u_1\\
    u_2
\end{bmatrix}$,  \(x:=-2\Re(\overline{u_1}u_2)\), 
\( y:=2\Im(\overline{u_1}u_2)\), and
 \(z:=|u_1|^2-|u_2|^2.\)
Some tedious, but straightforward, calculations show that $x^2+y^2+z^2=1$, and that
\[
 \phi(U)=(h-kx)^2+(k^2-h^2)z^2
       +(t+hz)_+^2+(t-hz)_+^2 : = \varphi(x,z).
\]
When $U$ is a unitary matrix, then the vector $\begin{bmatrix}
    x\\
    z
\end{bmatrix}$ ranges over
the closed unit disk; and when $U$ is a real orthogonal matrix, the same vector ranges over
the unit circle. Thus,
\[
 \dC(A)^2=\min_{x^2+z^2\leq1}\varphi(x,z) \leq 
 \min_{x^2+z^2=1}\varphi(x,z) = \dR(A)^2.
\]
Suppose now for a contradiction that $\dC(A) < \dR(A)$. Then, the minimum of $\varphi$ over the disk is not attained at the boundary. Thus, there exists an internal point $(x_0,z_0)$ such that $x_0^2+z_0^2 < 1$ and $\varphi(x_0,z_0) < \varphi(w_0,z_0)$, where $w_0:=\sqrt{1-z_0^2} > x_0$. However,
\[ \varphi(x_0,z_0)-\varphi(w_0,z_0) = (h-k x_0)^2 - \left(h-k w_0\right)^2 = k^2 (x_0^2 - w_0^2) + 2 hk (w_0-x_0)  \]
and hence, recalling $h>k\geq 0$,
\[ \varphi(x_0,z_0)-\varphi(w_0,z_0) \geq k^2 (w_0-x_0) \left(2-x_0-w_0\right) \geq 0,  \]
which is absurd. Therefore, $\dC(A) = \dR(A)$.
\end{proof}

To conclude the present section, we ask ourselves the same probabilistic question as we did in Section \ref{sec:normal}: If we uniformly sample $A$ from the unit sphere $\{ A \in \R^{n \times n} : \|A\|_F=1 \}$, where $n \geq 3$, is the probability that $A$ does not have any nearest real stable matrix strictly between $0$ and $1$? For Hurwitz stability, it is much more obvious that the answer is positive, for the simple reason that all matrices
in a sufficiently small neighbourhood (on the unit sphere) of $-I_n/\sqrt{n}$ are stable; or of course $-I_n/\sqrt{n}$ could be replaced by any other matrix of unit norm and all eigenvalues having strictly negative real part. Together with
Theorem~\ref{thm:stable} and Corollary~\ref{thm:cone}, this gives
\[
 0<\mathbb P\bigl(\dR(A)=\dC(A)\bigr)<1
\]
for every $n \geq 3$ and for the uniform distribution of $A$ on the unit sphere.

The question becomes more interesting if we restrict ourselves to unstable inputs; in \cite{NP}, V. Noferini and F. Poloni gave ample numerical evidence that the answer should still be positive, but a formal proof has thus far been (to our knowledge) lacking. Theorem \ref{thm:stableball} fills this gap by exhibiting an open set of unstable matrices that have a real nearest stable matrix.

\begin{theorem}\label{thm:stableball}
For $n\geq2$, let
\[
 D_n=1 \oplus -2 I_{n-1},\qquad
 \mathcal U_n=
 \left\{X\in\R^{n\times n}:\|X-D_n\|_F<\frac14\right\}.
\]
Then, every $X\in\mathcal U_n$ is unstable and, for every
unitarily invariant norm $\nu$, it holds
\[
 d_{\R,\nu}(X)=d_{\C,\nu}(X)=c_\nu\sigma_{\min}(X).
\]
In addition, for almost every $X \in \mathcal{U}_n$, the nearest stable matrix in the Frobenius norm is unique.
\end{theorem}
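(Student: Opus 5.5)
Fix $X\in\mathcal U_n$ and write $E:=X-D_n$, so that $\|E\|_2\le\|E\|_F<\tfrac14$. The plan has three parts: a lower bound valid for all complex stable matrices, a real rank-one construction attaining it, and a differentiability argument for uniqueness.

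\emph{Instability and the key lower bound.} By Weyl's inequality for singular values, $\sigma_i(X)$ lies within $\tfrac14$ of $\sigma_i(D_n)\in\{1,2\}$. Hence $\sigma_{\min}(X)\in(\tfrac34,\tfrac54)$, and this smallest singular value is simple and separated from the others (which exceed $\tfrac74$). By Bauer--Fike applied to the normal matrix $D_n$, every eigenvalue of $X$ lies within $\tfrac14$ of $1$ or of $-2$. Counting with continuity along $D_n+sE$, $s\in[0,1]$, exactly one eigenvalue lies in the disk around $1$. It is real, being simple and nonreal eigenvalues occurring in conjugate pairs, so $X$ is unstable. For the lower bound, take any complex $Y\in\E$ and consider the path $Y_s:=X+s(Y-X)$. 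The key claim is that some $Y_s$ with $s\in(0,1]$ has an eigenvalue on the imaginary axis, hence is singular after a shift by $i\omega$: $\sigma_{\min}(Y_s-i\omega I)=0$. From this, $\nu(X-Y)\ge\nu(s(X-Y))\ge c_\nu\|X-i\omega I-(Y_s-i\omega I)\|_2\ge c_\nu\sigma_{\min}(X-i\omega I)$ by \eqref{eq:uinorm}. It then remains to show $\sigma_{\min}(X-i\omega I)\ge\sigma_{\min}(X)$ for all real $\omega$. For $X$ close to $D_n$ I would verify this via $(X-i\omega I)^*(X-i\omega I)=X^*X+\omega^2 I-i\omega(X-X^T)$. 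The last term is Hermitian with norm $\le 2|\omega|\|E\|_2<|\omega|/2$, so the smallest eigenvalue is at least $\sigma_{\min}(X)^2+\omega^2-|\omega|/2$. This alone fails for small $\omega$, so the estimate must be refined, and this is the main obstacle. I would first try first-order perturbation of the simple smallest singular value: its derivative in $\omega$ at $0$ is $\Re(-i\,u^*v)=0$ for real singular vectors $u,v$, and it is even in $\omega$. A second-order bound, using the gap between $\sigma_{\min}$ and the rest together with the smallness of the skew part, should give convexity on small $|\omega|$. Large $|\omega|$ is handled by the crude bound above.

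\emph{Upper bound.} Let $B:=X-\sigma_{\min}(X)uv^T$ with real singular vectors $u,v$. This is real, singular, and at distance exactly $c_\nu\sigma_{\min}(X)$ by \eqref{eq:uinorm}. We need $B$ stable. Its eigenvalues are within $\|X-B\|_2+\|E\|_2<\tfrac54+\tfrac14$ of those of $D_n$ by Bauer--Fike, so $n-1$ of them stay near $-2$ with negative real part. The remaining one is real by conjugation symmetry, since it is the unique eigenvalue near $1$ or $0$. Because $B$ is singular, that eigenvalue is $0$. This gives $d_{\R,\nu}=d_{\C,\nu}=c_\nu\sigma_{\min}(X)$.

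\emph{Uniqueness almost everywhere.} With the Frobenius norm we now know $\dR=\dC$ on $\mathcal U_n$. So case~2 of Theorem~\ref{thm:dichotomy} never occurs there, and case~1 holds for almost every $X\in\mathcal U_n$, which yields a unique, and real, minimizer.
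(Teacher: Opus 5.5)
\textbf{There is a gap at the one nontrivial step, which you flag yourself but do not prove.} Your architecture is the paper's: Weyl for the singular values, a Bauer--Fike/continuity count for the eigenvalues of $X$ and of $B=X-\sigma_{\min}(X)uv^T$, an intermediate-value argument along $X+s(Y-X)$, and Theorem~\ref{thm:dichotomy} for uniqueness. Those parts are fine. The intermediate-value argument correctly reduces the lower bound to $\sigma_{\min}(X-i\omega I)\ge\sigma_{\min}(X)$ for all real $\omega$. For $|\omega|<\frac12$ you leave that inequality as something that ``should'' follow.

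As sketched, it does not close. Let $g(\omega)=\lambda_{\min}(M(\omega))$ with $M(\omega)=X^TX+\omega^2I+i\omega(X-X^T)$ (the sign in your formula is flipped, but that is harmless). Knowing $g'(0)=0$ and $g''(0)>0$ only gives $g(\omega)\ge g(0)$ on some unquantified neighbourhood of $0$. Your crude bound only takes over at $|\omega|\ge\frac12$, so nothing covers the range in between.

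To complete your route you need a second-order estimate that is uniform on $|\omega|<\frac12$:
\begin{itemize}
\item There $\|\omega(X-X^T)\|_2<\frac14$, so by Weyl the smallest eigenvalue of $M(\omega)$ stays simple, with gap to the next one greater than $\frac32-\frac12=1$. Hence $g$ is analytic on this interval.
\item The second-variation formula, with $w_j$ orthonormal eigenvectors of $M(\omega)$, then gives
\[
 g''(\omega)=2-2\sum_{j\neq n}\frac{|w_j^*(X-X^T)w_n|^2}{\lambda_j(M(\omega))-g(\omega)}\ \ge\ 2-2\|X-X^T\|_2^2\ >\ \frac32 .
\]
\item Since $M(-\omega)=\overline{M(\omega)}$, $g$ is even. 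Evenness plus convexity yields $g(\omega)\ge g(0)=\sigma_{\min}(X)^2$ on the whole interval.
\end{itemize}

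The paper avoids both the case split and eigenvalue perturbation theory with a single trick. It rotates the phase of the unit eigenvector $p+iq$ for the eigenvalue $i\omega$ of $X+t(Y-X)$ so that $v^Tq=0$, where $v$ is the right singular vector for $\sigma_n=\sigma_{\min}(X)$. Then $\|Xq\|_2\ge\sigma_{n-1}\|q\|_2$. Because $X$ is real, the only cross term is the mixed skew one, and for every real $\omega$ at once
\[
 \|(X-i\omega I)(p+iq)\|_2^2=\|Xp\|_2^2+\|Xq\|_2^2+\omega^2-2\omega\,p^T(X-X^T)q\ \ge\ \sigma_n^2+\frac32\|q\|_2^2+\omega^2-|\omega|\,\|q\|_2\ \ge\ \sigma_n^2 .
\]
Every unit vector can be phase-rotated this way, so this is exactly your missing inequality. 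Both routes rest on the same ingredients: the gap $\sigma_{n-1}^2-\sigma_n^2>\frac32$ together with $\|X-X^T\|_2<\frac12$. What your sketch lacks is exploiting them over the entire range of $\omega$, not just at $\omega=0$.
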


\begin{proof}
Fix $X\in\mathcal U_n$, and denote its singular values by
$\sigma_1\geq\cdots\geq\sigma_n$.
Taking into account that $\|X-D_n\|_2<1/4$, Weyl's inequality
\cite[Theorem 3.3.16(c)]{HJtopics} guarantees that the smallest two singular values of $X$ are well separated:
\begin{equation}\label{eq:squarediffsv}
     \frac34<\sigma_n<\frac54 < \frac74 < \sigma_{n-1} \Rightarrow \sigma_{n-1}^2-\sigma_n^2 > \frac32.
\end{equation}
Moreover, $D_n=D_n^T$, and hence
\begin{equation}\label{eq:X-XT}
    \|X-X^T\|_2
 =\|(X-D_n)-(X-D_n)^T\|_2
 \leq 2\|X-D_n\|_2<\frac12.
\end{equation}
Let $u\in\R^n$ (resp. $v \in \R^n$) be a unit left (resp. right) singular vector of $X$
associated with $\sigma_n$, with $Xv=\sigma_nu$.
The matrix $B=X-\sigma_nuv^T$ is singular and, by \eqref{eq:squarediffsv} and \eqref{eq:X-XT}, $\|B-D_n\|_2\leq\|X-D_n\|_2+\sigma_n< 3/2$. Thus, by \cite[Theorem~5.3]{Ipsen}, the eigenvalues of $B$ can be labeled so that they satisfy
\[
 |\lambda_1(B)-1|<\frac32,\qquad
 |\lambda_j(B)+2|<\frac32,\quad j=2,\ldots,n.
\]
Since $B$ is singular, this implies that $\lambda_1(B)=0$ and $\Re(\lambda_j(B)) < 0$ for $j=2,\dots,n$. Hence, $B$ is stable. On the other hand, \eqref{eq:X-XT} implies that $\|X-D_n\|_2 < 1/4$, and repeating the very same argument as above shows that $X$ is unstable, because one of its eigenvalues has real part greater than $3/4$.

It remains to prove that there is no stable matrix nearer to $X$ than $B$. To this goal, let $Y \in \C^{n \times n}$ be stable.
By the continuity of the spectral abscissa (the maximal
real part of the eigenvalues; see \cite{BLO}), there exists a positive real number $t \leq 1$
such that $X+t(Y-X)$ has a pure imaginary eigenvalue $i\omega$, $\omega\in\R$. Pick an associated eigenvector $p+iq$, with $\|p+iq\|_2=1$ and
$p,q\in\R^n$. Without loss of generality up to changing the phase of this eigenvector, we may furthermore assume that $v^T q = 0$, which implies $\|X q\|_2 \geq \sigma_{n-1} \| q \|_2$. Hence,
\begin{align*}
 \|(X-i\omega I_n)(p+iq)\|_2^2
 =\|Xp\|_2^2+\|Xq\|_2^2+\omega^2
   -2\omega p^T(X-X^T)q\\
 \geq \sigma_n^2+\frac32\|q\|_2^2+\omega^2
   -|\omega|\|p\|_2\|q\|_2\geq \sigma_n^2+\|q\|_2^2+\frac12\omega^2
 \geq \sigma_n^2.
\end{align*}
Here, for the first inequality we used the Cauchy-Schwarz inequality, the relation
$\|p\|_2^2+\|q\|_2^2=1$,
\eqref{eq:squarediffsv},
and \eqref{eq:X-XT}. For the second inequality, we have used
$\|p\|_2\leq1$ and the bound (valid for all $a,b\in\R$)
$2ab\leq a^2+b^2$.
Therefore, rearranging the eigenvector equation as $(X-i \omega I_n)(p+iq) = t(X-Y)(p+iq)$, we get
\[
 \sigma_n
 \leq\|(X-i\omega I_n)(p+iq)\|_2
 =t\|(X-Y)(p+iq)\|_2
 \leq\|X-Y\|_2.
\]
By \eqref{eq:uinorm}, the last inequality implies $\nu(X-Y)\geq c_\nu\sigma_n$.
Furthermore, $\rank(X-B)=\rank(\sigma_nuv^T)=1$. Hence, again by \eqref{eq:uinorm},
$\nu(X-B)=c_\nu\sigma_n \leq \nu(X-Y)$. Since $Y$ was an arbitrary stable matrix, this proves that $B$ is a minimizer of the distance to $X$.

It remains to prove that, for almost every $X$ and for $\nu(\cdot)=\| \cdot \|_F$, $B$ is the unique minimizer; but this immediately follows from Theorem \ref{thm:dichotomy}.
\end{proof}

In the notation of Theorem \ref{thm:stableball}, if we scale the elements of $\mathcal{U}_n$ by $\|D_n\|_F^{-1}$ we obtain an open ball centered at $D_n/\|D_n\|_F$, whose elements are still real unstable matrices having a nearest stable matrix which is real. Again, the intersection of this ball with the unit sphere has positive measure, and thus (taking also into account Corollary \ref{thm:cone} and Theorem \ref{thm:stable}) we are able to conclude that, for $n\geq3$ and $A$ uniformly drawn from the unit sphere,

\[
 0<\mathbb P\bigl(\dR(A)=\dC(A) \mid A \not\in \E\bigr)<1.
\]

\section{Conclusions}\label{sec:conc}

For a matrix nearness problem whose target set $\E \subseteq \C^{n \times n}$ is invariant by complex conjugation and has nonempty intersection with $\R^{n \times n}$, it is tempting to conjecture that there must be a real matrix in $\E$ nearest to a real input $A$. In this paper, we have argued that while for certain problems this is indeed true (for example when $\E$ is convex or unitarily invariant), generally the intuition fails. We have in particular disproved the conjecture when $\E$ is either the set of Hurwitz stable matrices or the set of normal matrices.

\end{document}